\titlespacing{\paragraph}{0em}{0em}{0.5em}
\titlespacing{\subparagraph}{0em}{0em}{0.5em}
\newcolumntype{L}[1]{>{\raggedright\arraybackslash}m{#1}}
\theoremstyle{definition}
\newtheorem*{theorem*}{Theorem}
\newtheorem*{definition*}{Definition}
\newtheorem{theorem}{Theorem}[section]
\newtheorem{lemma}[theorem]{Lemma}
\newtheorem{proposition}[theorem]{Proposition}
\newtheorem{cor}[theorem]{Corollary}
\newcommand{\zz}{\mathbb{Z}}
\newcommand{\cc}{\mathbb{C}}
\newcommand{\ff}{\mathbb{F}}
\newcommand{\bbb}{\mathcal{B}}
\newcommand{\ccc}{\mathcal{C}}
\newcommand{\ttt}{\mathcal{T}}
\newcommand{\ad}{\mathrm{ad}}
\newcommand{\rank}{\mathrm{rank}}
\newcommand{\spann}{\mathrm{span}}
\newcommand{\charr}{\mathrm{char}}
\newcommand{\diag}{\mathrm{diag}}
\newcommand{\tr}{\mathrm{tr}}
\newcommand{\im}{\mathrm{im}}
\newcommand{\quo}[2]{{\raisebox{.2em}{$#1$}\left/\raisebox{-.2em}{$#2$}\right.}}
\begin{document}

\title[On Solvable Lie Algebras of Small Breadth]{On Solvable Lie Algebras of Small Breadth} 

\author[]{Borworn Khuhirun}
\address{Borworn Khuhirun\\ Department of Mathematics and Statistics, Faculty of Science and Technology, Thammasat University, Rangsit Campus, Pathum Thani 12121, Thailand} 
\email{\tt borwornk@mathstat.sci.tu.ac.th}

\author[]{Korkeat Korkeathikhun$^*$}
\thanks{*Corresponding Author}
\address{Korkeat Korkeathikhun\\ Department of Mathematics and Computer Science, Faculty of Science, Chulalongkorn University, Bangkok 10330, Thailand} 
\email{\tt korkeat.k@chula.ac.th, korkeat.k@gmail.com}

\author[]{Songpon Sriwongsa}
\address{Songpon Sriwongsa \\ Department of Mathematics \\ Faculty of Science \\ King Mongkut's University of Technology Thonburi (KMUTT) \\ 126 Pracha-Uthit Road \\ Bang Mod, Thung Khru \\ Bangkok 10140, Thailand}
\email {\tt songpon.sri@kmutt.ac.th}

\author[]{Keng Wiboonton}

\address{Keng Wiboonton \\ Department of Mathematics and Computer Scienc \\ Faculty of Science \\ Chulalongkorn University \\ Bangkok 10330, Thailand}
\email{\tt keng.w@chula.ac.th}

%please add your address here

\keywords{Breadth; Pure Lie Algebra; Solvable Lie Algebra.}

\subjclass[2010]{Primary: 17B30}

\begin{abstract}
The concept of breadth has been used in the classification of $p$-groups and nilpotent Lie algebras. In this paper, we investigate this notion for finite-dimensional solvable Lie algebras. Our main focus is to characterize solvable Lie algebras of breadth less than or equal to 2. More importantly, we provide a complete classification of such Lie algebras that are pure and nonnilpotent over the complex numbers.
\end{abstract}

\date{}

\maketitle

%%%%%%%%%%%%%%%%%%%%%%%%%%%%%%%%

\section{Introduction}
\vspace{\baselineskip}
The problem of classifying algebraic objects is a central topic in algebra. In Lie theory, it is well known that nilpotent and solvable Lie algebras have not yet been completely classified; only certain classes have been studied. For example, nilpotent Lie algebras over the complex numbers of dimension at most $7$ have been classified (see \cite{GK96}). The problem becomes increasingly difficult as the dimension grows. To make the classification more manageable, researchers have approached the problem by imposing additional conditions on these Lie algebras. One such condition involves the notion of the {\it breadth} of a Lie algebra. 

In 2015, Khuhirun et al. characterized finite-dimensional nilpotent Lie algebras of breadth $1$ and $2$ \cite{KMS15}. Moreover, the authors used the results to classify finite-dimensional nilpotent Lie algebras of breadth $1$ for any dimension and of breadth $2$ for dimensions $5$ and $6$. Later, Remm extended the work to finite-dimensional nilpotent Lie algebras of breadth $2$ and $3$ via their characteristic sequences \cite{R17}. Independently, in 2021, Sriwongsa et al. gave a characterization of finite-dimensional nilpotent Lie algebra of breadth $3$ over finite fields of odd characteristic \cite{SWK21}. Further developments in this direction, particularly related to the breadth type $(0,3)$, can be found in \cite{KNS231, KNS23}. The notion of breadth for nilpotent Lie algebras is defined analogously to that introduced for finite $p$-groups in \cite{PS99}. That paper focused on the characterization of finite $p$-groups of breadth $1, 2$ and $3$. It should be noted that the results in \cite{PS99} significantly motivated the subsequent work \cite{KMS15, SWK21}.

Given the importance of such classifications, it is also natural to study the problem in the context of solvable Lie algebras. In general, classifying finite-dimensional Lie algebras is challenging due to the vast number of possibilities. The classification of solvable Lie algebras over an arbitrary field has been completed up to dimension $4$ \cite{G05}. Over the real field, solvable Lie algebras have been classified up to dimension $6$ \cite{M63}. For related works on solvable Lie algebras with certain conditions, we refer to \cite{SK10, LCDNV22, T11}.

Motivated by the above discussion, in this paper, we provide a characterization of finite-dimensional solvable Lie algebras over an odd characteristic field of breadth $1$ and over the complex field of breadth $2$ as presented in Section \ref{Basic prop}. We also give a complete classification of finite-dimensional pure solvable nonnilpotent Lie algebras over the complex numbers of breadth $2$. It is worth noting that every nilpotent Lie algebra is solvable, and finite-dimensional nilpotent Lie algebras of breadth $2$ have already been studied in the aforementioned papers. Therefore, we exclude this class of Lie algebras from our consideration. Moreover, to simplify the problem, we assume that the Lie algebras are pure, thereby disregarding direct summands that are abelian ideals.

%%%%%%%%%%%%%%%%%%%%%%%%%%%%%%%%

\section{Basic properties of breadth for Lie algebras} \label{Basic prop}
\vspace{\baselineskip}

Throughout this paper, all Lie algebras are assumed to be finite-dimensional. In this section, we assume that $L$ is a Lie algebra over a field $\ff$ with $\charr(\ff)\neq2$ until Corollary \ref{2.7}. For any $x\in L$ and an ideal $A$ of $L$, \textsl{breadth} of $x$ in $A$ is $b_A(x):=\rank(\ad_x|_A)$ and $b_A(L):=\max\{b_A(x)\mid x\in L\}$. For convenience, we denote $b(x)=b_L(x)$. The \textsl{breadth} of $L$ is $b(L)=b_L(L)=\max\{b(x)\mid x\in L\}$. It is obvious that $b_A(L)\leq\dim[A,L]$ and $b_A(L)\leq b(L)$. Furthermore, $L$ is abelian if and only if $b(L)=0$. Let $T_A=\{x\in L\mid b_A(x)=1\}$. The characterization and classification of Lie algebras of breadth $1$ are known as follows.

%%%%%%%%%%%%%%%%%%%%%%%%%%%%%%%%%%%%%%%%%%%%%%

\begin{theorem}\cite{KMS15}\label{breadthone}
$b(L)=1$ if and only if $\dim[L,L]=1$.
\end{theorem}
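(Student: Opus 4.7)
The plan is to prove the two implications separately. The forward direction ($\Leftarrow$) is essentially immediate: if $\dim[L,L] = 1$, then for every $x \in L$ we have $\im(\ad_x) \subseteq [L,L]$, so $b(x) \leq 1$ for all $x$; since $[L,L] \neq 0$ forces $L$ to be nonabelian, some $\ad_x$ has rank exactly $1$, and thus $b(L) = 1$.

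The substantive direction is ($\Rightarrow$). Assume $b(L) = 1$. Since $[L,L] = \sum_{x \in L} \im(\ad_x)$ and each summand has dimension at most $1$, it suffices to show that all nonzero images $\im(\ad_x)$ are the same line in $L$. I would argue this by contradiction: suppose there exist $x, y \in L$ with $\im(\ad_x) = \ff u$ and $\im(\ad_y) = \ff v$ for linearly independent $u, v$. The rank-$1$ condition lets one encode $\ad_x$ and $\ad_y$ through linear functionals $\lambda, \mu : L \to \ff$ via $[x,z] = \lambda(z)\, u$ and $[y,z] = \mu(z)\, v$, both nonzero on $L$.

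The contradiction is then obtained by a case split on how $\lambda$ and $\mu$ sit in $L^*$. If $\lambda$ and $\mu$ are linearly independent, duality yields $z_1, z_2 \in L$ with $\lambda(z_1) = \mu(z_2) = 1$ and $\lambda(z_2) = \mu(z_1) = 0$; then
\[
[x+y, z_1] = u, \qquad [x+y, z_2] = v,
\]
so $\rank(\ad_{x+y}) \geq 2$, contradicting $b(L) = 1$. Otherwise $\mu$ is a nonzero scalar multiple of $\lambda$, and after absorbing the scalar into $v$ we may assume $\mu = \lambda$. Choosing any $z \in L$ with $\lambda(z) \neq 0$, the image of $\ad_z$ then contains both $[z,x] = -\lambda(z)\, u$ and $[z,y] = -\lambda(z)\, v$, which are linearly independent; hence $\rank(\ad_z) \geq 2$, again contradicting $b(L) = 1$.

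The place where a little care is required is the case analysis on $\lambda, \mu$: one must use $\ad_{x+y}$ to rule out the linearly independent case and then exploit a well-chosen $\ad_z$ to kill the proportional case, noting in the latter that the rescaling of $v$ does not change $\im(\ad_y)$. Once the claim is established, every nonzero $\im(\ad_x)$ lies on a single line $\ell$, and therefore $[L,L] = \ell$ has dimension $1$.
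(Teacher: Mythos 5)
The paper does not actually prove this statement—it is imported from \cite{KMS15} without proof—so there is no in-paper argument to compare against; your proof is correct and complete, and it is the standard argument for this fact. In particular you correctly identify the one delicate point: when the functionals $\lambda$ and $\mu$ are proportional, $\ad_{x+y}$ alone does not produce a rank-$2$ map, and your switch to $\ad_z$ for $z\notin\ker\lambda$ (which sends $x\mapsto-\lambda(z)u$ and $y\mapsto-\lambda(z)v$) closes that case; note also that your argument nowhere uses $\charr(\ff)\neq 2$ or nilpotency, so it proves the statement in the generality in which this paper uses it.
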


%%%%%%%%%%%%%%%%%%%%%%%%%%%%%%%%%%%%%%%%%%%%%%

\begin{proposition}\cite{KMS15}\label{breadthoneclassify}
Let $L$ be an $n$-dimensional nilpotent Lie algebra of breadth 1. Then $L$ has a basis $\{x_1,y_1,x_2,y_2,\ldots,x_k,y_k,z_1,z_2,\ldots,z_{n-2k}\}$ with the nonzero brackets given by $[x_i,y_j]=\delta_{ij}z_1$ and $z_1,z_2,\ldots,z_{n-2k}\in Z(L)$.
\end{proposition}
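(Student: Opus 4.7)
The plan is to reduce the classification to the structure theorem for alternating bilinear forms. By Theorem \ref{breadthone}, $\dim[L,L]=1$, so fix a nonzero $z_1$ spanning $[L,L]$. The first step is to show $[L,L]\subseteq Z(L)$: since $L$ is nilpotent, the lower central series terminates, hence $[L,[L,L]]\subsetneq[L,L]$, and because $[L,L]$ is one-dimensional this forces $[L,[L,L]]=0$, i.e., $z_1\in Z(L)$.

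Because $[L,L]\subseteq Z(L)$, the Lie bracket factors through the quotient and defines an alternating $\ff$-bilinear form
\[
\omega:L/[L,L]\times L/[L,L]\to\ff,\qquad [x,y]=\omega(\bar x,\bar y)\,z_1.
\]
Next, I would invoke the standard symplectic-basis theorem for alternating bilinear forms (valid over any field, and in particular here since $\charr(\ff)\neq2$) to obtain a basis
\[
\bar x_1,\bar y_1,\ldots,\bar x_k,\bar y_k,\bar w_1,\ldots,\bar w_{n-1-2k}
\]
of $L/[L,L]$ satisfying $\omega(\bar x_i,\bar y_j)=\delta_{ij}$, $\omega(\bar x_i,\bar x_j)=\omega(\bar y_i,\bar y_j)=0$, and $\bar w_l\in\rad(\omega)$ for every $l$.

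Finally, I would lift these representatives arbitrarily to elements $x_i,y_i,w_l\in L$. The defining relation $[x,y]=\omega(\bar x,\bar y)z_1$ then immediately yields $[x_i,y_j]=\delta_{ij}z_1$, $[x_i,x_j]=[y_i,y_j]=0$, and $[w_l,v]=0$ for every $v\in L$; in particular each $w_l$ is central. Setting $z_2:=w_1,\ldots,z_{n-2k}:=w_{n-1-2k}$ and retaining the original $z_1$ produces the claimed basis, and the count $2k+(n-2k)=n$ confirms spanning. The only step that requires genuine thought is the first one, establishing $[L,L]\subseteq Z(L)$; once that is in place, the symplectic normal form for $\omega$ translates directly into the Lie-algebraic presentation in the statement.
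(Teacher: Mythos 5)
Your argument is correct. The paper itself states this proposition as a citation from \cite{KMS15} and gives no proof, so there is nothing to diverge from; your route --- first forcing $[L,L]\subseteq Z(L)$ from nilpotency and $\dim[L,L]=1$, then reading the bracket as an alternating form on $L/[L,L]$ and applying the symplectic normal form --- is exactly the technique the paper deploys for the nonnilpotent analogue in Theorem \ref{breadthonesolvable}, and matches the original argument in \cite{KMS15}. All the steps check out, including the key observation that $L^2\subsetneq L^1$ forces $L^2=0$ when $\dim L^1=1$, and the dimension count after lifting.
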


%%%%%%%%%%%%%%%%%%%%%%%%%%%%%%%%%%%%%%%%%%%%%%

As every ideal of a nilpotent Lie algebra intersects its center nontrivially, we have the following result.

%%%%%%%%%%%%%%%%%%%%%%%%%%%%%%%%%%%%%%%%%%%%%%

\begin{proposition}\label{breadthonenilpotent}
Let $L$ be a Lie algebra of breadth 1. Then $L$ is nilpotent if and only if $[L,L]\subseteq Z(L)$.
\end{proposition}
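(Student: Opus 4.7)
The plan is to use Theorem \ref{breadthone} to translate $b(L)=1$ into the condition $\dim[L,L]=1$, and then argue each direction separately. The forward direction relies on the hint provided in the text (that ideals of nilpotent Lie algebras meet the center nontrivially), while the backward direction is essentially a nilpotency-class-$2$ observation.

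For the forward direction, I would assume $L$ is nilpotent of breadth $1$. By Theorem \ref{breadthone}, $[L,L]$ is a $1$-dimensional ideal of $L$. Since $L$ is nilpotent, every nonzero ideal of $L$ has nonzero intersection with $Z(L)$; in particular $[L,L]\cap Z(L)\neq 0$. Being a nonzero subspace of the $1$-dimensional space $[L,L]$, this intersection must equal $[L,L]$ itself, giving $[L,L]\subseteq Z(L)$.

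For the backward direction, I would assume $[L,L]\subseteq Z(L)$. Then $[L,[L,L]]\subseteq [L,Z(L)]=0$, so the lower central series terminates at the third term, showing $L$ is nilpotent (of class at most $2$). Note that this implication does not require the breadth hypothesis.

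There is no real obstacle here: the theorem is essentially a direct corollary of Theorem \ref{breadthone} together with the standard fact about nonzero ideals meeting the center of a nilpotent Lie algebra. The only thing to be careful about is invoking the $1$-dimensionality of $[L,L]$ in the forward direction to upgrade the nontrivial intersection with $Z(L)$ to full containment.
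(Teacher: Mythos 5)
Your proof is correct and follows essentially the same route as the paper: both directions reduce to $\dim[L,L]=1$ via Theorem \ref{breadthone}, use the fact that nonzero ideals of a nilpotent Lie algebra meet the center nontrivially for the forward direction (the paper phrases it contrapositively), and observe that $[L,L]\subseteq Z(L)$ forces nilpotency of class at most $2$ for the converse.
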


\begin{proof}
Since $b(L)=1$, we have $\dim[L,L]=1$ by Theorem \ref{breadthone}. If $[L,L]\not\subseteq Z(L)$, then $[L,L]\cap Z(L)=\{0\}$, so $L$ is not nilpotent. The converse is obvious.
\end{proof}

%%%%%%%%%%%%%%%%%%%%%%%%%%%%%%%%%%%%%%%%%%%%%%

\begin{proposition}
If $b(L)\leq1$, then $L$ is solvable.
\end{proposition}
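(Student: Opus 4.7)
The plan is to split into two trivial cases according to the value of $b(L)$. If $b(L)=0$, then as noted in the preliminary remarks, $L$ is abelian, and so $L$ is solvable (with derived series terminating at the first step).

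If $b(L)=1$, I would invoke Theorem \ref{breadthone} to conclude that $\dim[L,L]=1$. Since any one-dimensional Lie algebra is abelian, this forces $[[L,L],[L,L]]=0$. Hence the derived series
\[
L \supseteq [L,L] \supseteq [[L,L],[L,L]] = 0
\]
terminates in at most two steps, so $L$ is solvable.

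There is no real obstacle here; the statement is essentially a direct corollary of Theorem \ref{breadthone} combined with the observation that a one-dimensional subalgebra is automatically abelian. The only care required is to handle the case $b(L)=0$ separately, since Theorem \ref{breadthone} is stated as an equivalence for $b(L)=1$ rather than $b(L)\leq 1$.
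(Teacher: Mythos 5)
Your proof is correct and follows the same route as the paper: split into the cases $b(L)=0$ and $b(L)=1$, and in the latter use Theorem \ref{breadthone} to get $\dim[L,L]=1$, so that $[L,L]$ is abelian and the derived series terminates. The paper phrases this as ``$[L,L]$ is abelian which is solvable, hence $L$ is solvable,'' which is exactly your argument with the derived series left implicit.
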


\begin{proof}
If $b(L)=0$, then $L$ is abelian. On the other hand, if $b(L)=1$, then $\dim[L,L]=1$ by Theorem \ref{breadthone}. Thus $[L,L]$ is abelian which is solvable. Hence $L$ is solvable.
\end{proof}

%%%%%%%%%%%%%%%%%%%%%%%%%%%%%%%%%%%%%%%%%%%%%%

For semisimple Lie algebra, we have the following proposition.

%%%%%%%%%%%%%%%%%%%%%%%%%%%%%%%%%%%%%%%%%%%%%%

\begin{proposition}
If $L$ is semisimple, then $b(L)\geq2$.
\end{proposition}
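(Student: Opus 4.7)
The plan is to leverage the immediately preceding proposition, which says that $b(L) \leq 1$ forces $L$ to be solvable. So I would proceed by contradiction: assume $b(L) \leq 1$ and derive that $L$ must then be both semisimple and solvable, which is impossible for a nonzero Lie algebra.

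More concretely, first I would split into the cases $b(L)=0$ and $b(L)=1$. The case $b(L)=0$ is immediate since it means $L$ is abelian (an observation already noted in the text), and an abelian Lie algebra is solvable, hence cannot be semisimple (we of course tacitly take $L$ to be nonzero, as the classical convention demands for semisimple algebras). For $b(L)=1$, I would appeal to Theorem \ref{breadthone} to conclude $\dim[L,L]=1$; but for a semisimple Lie algebra one has the perfectness identity $[L,L]=L$, forcing $\dim L = 1$, and a one-dimensional Lie algebra is abelian, again contradicting semisimplicity. Either route collapses the case into contradiction.

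Alternatively—and perhaps more elegantly—I could skip the case analysis entirely and cite the preceding proposition directly: $b(L)\leq 1$ implies $L$ is solvable, while a nonzero semisimple $L$ has no nonzero solvable ideals and in particular is not itself solvable, contradiction. This is essentially a one-line argument.

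There is no real obstacle here; the result is a formal consequence of $b(L)\leq 1\Rightarrow L$ solvable combined with the definition of semisimplicity. The only subtle point worth a brief remark is the standing assumption $L \neq 0$, since the zero algebra is technically both (vacuously) semisimple and of breadth $0$; I expect the authors to treat this as implicit in the semisimple hypothesis.
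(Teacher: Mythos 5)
Your proposal is correct and follows essentially the same route as the paper: dispose of $b(L)=0$ by nonabelianness and of $b(L)=1$ via Theorem~\ref{breadthone}. If anything, your contradiction step in the case $b(L)=1$ (perfectness $[L,L]=L$ forces $\dim L=1$) is tighter than the paper's, which only remarks that $[L,L]$ is a proper ideal — by itself not a contradiction with semisimplicity, though the intended point (a nonzero one-dimensional, hence abelian, ideal) is; your one-line alternative via the preceding solvability proposition is equally valid.
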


\begin{proof}
Because $L$ is semisimple, it is nonabelian. Therefore $\dim L\geq2$ and $b(L)\neq0$. If $b(L)=1$, then $\dim[L,L]=1$. Thus $L$ contains a proper ideal $[L,L]$, a contradiction. Consequently, $b(L)\geq2$.
\end{proof}

%%%%%%%%%%%%%%%%%%%%%%%%%%%%%%%%%%%%%%%%%%%%%%

A Lie algebra $L$ is called \textsl{pure} or \textsl{stem} if it does not have an abelian ideal as a direct summand. It follows directly that $L$ is pure if and only if $Z(L)\subseteq[L,L]$. In addition, if $L=L_1\oplus L_2$ as a direct sum of Lie algebras, then $b(L)=b(L_1)+b(L_2)$ (c.f. \cite{KMS15}). It is natural to consider pure Lie algebras since the abelian component does not affect the breadth. In the next theorem, we characterize finite-dimensional pure nonnilpotent solvable Lie algebras of breadth 1.

%%%%%%%%%%%%%%%%%%%%%%%%%%%%%%%%%%%%%%%%%%%%%%

\begin{theorem}\label{breadthonesolvable}
Let $n\in\zz_{\geq2}$ and $L$ be an $n$-dimensional nonnilpotent solvable Lie algebra of breadth 1. Then $L$ has a basis $\{x,y,z_1,z_2,\ldots,z_{n-2}\}$ such that $[x,y]=x$ and $z_1,z_2,\ldots,z_{n-2}\in Z(L)$.
\end{theorem}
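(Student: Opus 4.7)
The plan is to build the desired basis in two stages, first locating the pair $(x,y)$ and then showing that every other basis vector can be taken central. By Theorem \ref{breadthone}, $\dim[L,L]=1$, so I pick $x\neq 0$ with $[L,L]=\ff x$. Since $L$ is nonnilpotent, Proposition \ref{breadthonenilpotent} gives $[L,L]\not\subseteq Z(L)$, so some $y\in L$ satisfies $[x,y]\neq 0$; as $[x,y]\in\ff x$, after rescaling $y$ I may assume $[x,y]=x$. This produces the nonabelian $2$-dimensional subalgebra $\spann\{x,y\}$.

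The core of the proof is to show that the remaining $n-2$ basis vectors can be chosen in $Z(L)$, and for this I would prove $\dim Z(L)=n-2$ and $Z(L)\cap\spann\{x,y\}=\{0\}$. Since every bracket lands in $\ff x$, the formulas $[w,x]=\psi(w)\,x$ and $[w,y]=\chi(w)\,x$ define linear functionals $\psi,\chi\in L^{*}$, and clearly $Z(L)\subseteq\ker\psi\cap\ker\chi$. The key claim is the reverse inclusion: given $z\in\ker\psi\cap\ker\chi$, write $\ad_z(w)=g(w)\,x$ for a linear functional $g$ (possible because $\ad_z(L)\subseteq\ff x$), with $g(x)=g(y)=0$ by hypothesis; then apply the Jacobi identity to the triple $(z,w,y)$. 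Two of the three terms vanish because $[z,y]=0$ and $[x,z]=-\psi(z)\,x=0$, while the remaining term reduces to $g(w)\,x$, forcing $g\equiv 0$ and hence $z\in Z(L)$.

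Evaluating $\psi$ and $\chi$ on $\{x,y\}$ yields a nonsingular $2\times 2$ matrix, so $\psi,\chi$ are linearly independent, giving $\dim Z(L)=\dim(\ker\psi\cap\ker\chi)=n-2$. A direct check on an arbitrary element $\alpha x+\beta y$ using $[x,y]=x$ shows $Z(L)\cap\spann\{x,y\}=\{0\}$. Therefore $L=\spann\{x,y\}\oplus Z(L)$ as vector spaces, and any basis $z_1,\ldots,z_{n-2}$ of $Z(L)$ completes the basis asserted in the theorem.

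The only real technical point is the Jacobi identity computation that upgrades $[z,x]=[z,y]=0$ to centrality of $z$; once this is established the remainder is linear algebra, and I do not anticipate any serious obstacle.
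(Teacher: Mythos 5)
Your proof is correct, and while it starts the same way as the paper's (produce $x$ spanning $[L,L]$ and $y$ with $[x,y]=x$ from nonnilpotency via Proposition \ref{breadthonenilpotent}), the second half takes a genuinely different route. The paper fixes an arbitrary vector-space complement $V$ of $\spann\{x,y\}$, views the bracket on $V\times V$ as an alternating form valued in $\ff x$, puts it in symplectic normal form $\{x_1,y_1,\ldots,x_k,y_k,z_1,\ldots,z_{n-2k-2}\}$, and then uses the Jacobi identity on $[y,[x_j,y_j]]$ to force the symplectic rank $k$ to be $0$. You instead introduce the functionals $\psi,\chi$ recording brackets against $x$ and $y$, show they are linearly independent (their restriction to $\spann\{x,y\}$ has matrix $\begin{pmatrix}0&-1\\1&0\end{pmatrix}$), prove $Z(L)=\ker\psi\cap\ker\chi$ via the Jacobi identity on $(z,w,y)$, and conclude $L=\spann\{x,y\}\oplus Z(L)$ by a dimension count. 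The underlying mechanism is the same in both — Jacobi together with $[x,y]=x$ forces commutators valued in $\ff x$ to vanish — but your version identifies the center directly and, in particular, makes explicit why the complementary basis vectors commute with $x$ and $y$; in the paper's argument the chosen $z_i$ a priori only annihilate $V$, and their centrality in all of $L$ is asserted rather than checked. What the paper's symplectic-form setup buys is a visible parallel with the nilpotent case (Proposition \ref{breadthoneclassify}), where the analogous rank $k$ is arbitrary rather than forced to vanish.
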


\begin{proof}
By Theorem \ref{breadthone}, $\dim[L,L]=1$, says $[L,L]=\spann\{x\}$ for some $x\in L$. Because $L$ is nonnilpotent, $[L,L]\nsubseteq Z(L)$ by Proposition \ref{breadthonenilpotent}. Thus $[L,L]\cap Z(L)=\{0\}$ and $x\neq Z(L)$. There exists $y\in L$ such that $[x,y]=x$. Let $V$ be a complementary subspace of $\spann\{x,y\}$ in $L$. For any $u,v\in L$, there exists $\alpha_{uv}\in\ff$ such that $[u,v]=\alpha_{uv}x$. Define a bilinear form $\varphi:V\times V\to\ff$ by $\varphi(u,v)=\alpha_{uv}$ for any $u,v\in L$. Then $\varphi$ is alternating, so there exists a basis $\bbb=\{x_1,y_1,\ldots,x_k,y_k,z_1,\ldots,z_{n-2k-2}\}$ of $V$ such that the matrix of $\varphi$ with respect to $\bbb$ is a block diagonal matrix $\diag(S_1,S_2,\ldots,S_k,0,0,\ldots,0)$ where $S_i=\begin{pmatrix}0&1\\-1&0\end{pmatrix}$ for all $i=1,2,\ldots,k$. As a result, we have $L=\spann\{x,y,x_1,y_1,\ldots,x_k,y_k,z_1,\ldots,z_{n-2k-2}\}$ such that $[x,y]=x, [x_i,y_i]=\delta_{ij}x$, where $\delta_{ij}$ is the Kronecker delta and $z_1,\ldots,z_{n-2k-2}\in Z(L)$. Next, we will claim that $k=0$. Assume that $k\geq1$. Then for any $j\in\{1,2,\ldots,k\}$, we get
$-x=[y,x]=[y,[x_j,y_j]]=[[y,x_j],y_j]+[x_j,[y,y_j]]=0$, a contradiction. Hence $L$ has a basis $\{x,y,z_1,z_2,\ldots,z_{n-2}\}$ such that $[x,y]=x$ and $z_1,z_2,\ldots,z_{n-2}\in Z(L)$.
\end{proof}

%%%%%%%%%%%%%%%%%%%%%%%%%%%%%%%%%%%%%%%%%%%%%%

Moreover, we have the following corollary when $L$ is pure.

%%%%%%%%%%%%%%%%%%%%%%%%%%%%%%%%%%%%%%%%%%%%%%

\begin{cor}\label{2.7}
Let $L$ be a finite-dimensional pure nonnilpotent solvable Lie algebra of breadth 1. Then $L$ has a basis $\{x,y\}$ such that $[x,y]=x$.
\end{cor}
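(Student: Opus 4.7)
The plan is to combine Theorem \ref{breadthonesolvable} with the purity hypothesis to force the number of central basis vectors to be zero. First I would invoke Theorem \ref{breadthonesolvable} to obtain a basis $\{x,y,z_1,\ldots,z_{n-2}\}$ of $L$ with $[x,y]=x$ and each $z_i\in Z(L)$, where $n=\dim L$. The whole task then reduces to showing $n=2$, i.e., that no central $z_i$ can occur.

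Next I would analyze $Z(L)$. Since $L$ has breadth $1$, Theorem \ref{breadthone} gives $\dim[L,L]=1$, and the proof of Theorem \ref{breadthonesolvable} already identifies $[L,L]=\spann\{x\}$. Because $L$ is pure, the characterization recorded in the excerpt yields $Z(L)\subseteq[L,L]=\spann\{x\}$. However, $x$ itself cannot lie in $Z(L)$, since $[x,y]=x\neq 0$. Therefore $Z(L)\cap\spann\{x\}=\{0\}$, which forces $Z(L)=\{0\}$.

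Since each $z_i$ belongs to $Z(L)=\{0\}$, there can be no $z_i$'s in the basis; equivalently $n-2=0$, so $n=2$ and the basis reduces to $\{x,y\}$ with $[x,y]=x$, as required. The only thing to be careful about is the compatibility of the two facts ``$Z(L)\subseteq[L,L]$'' (from purity) and ``$x\notin Z(L)$'' (from nonnilpotency built into Theorem \ref{breadthonesolvable}); no genuine obstacle arises because these two conditions immediately squeeze $Z(L)$ down to zero, and the rest is just reading off the basis produced in the previous theorem.
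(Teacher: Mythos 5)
Your argument is correct and is exactly the reasoning the paper leaves implicit when it states this as a corollary of Theorem \ref{breadthonesolvable}: purity gives $Z(L)\subseteq[L,L]=\spann\{x\}$, while $[x,y]=x\neq0$ excludes $x$ from $Z(L)$, so $Z(L)=\{0\}$ and the central basis vectors $z_i$ cannot occur. Nothing further is needed.
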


%%%%%%%%%%%%%%%%%%%%%%%%%%%%%%%%%%%%%%%%%%%%%%
%%%%%%%%%%%%%%%%%%%%%%%%%%%%%%%%%%%%%%%%%%%%%%
%%%%%%%%%%%%%%%%%%%%%%%%%%%%%%%%%%%%%%%%%%%%%%

We recall the standard notations of derived series and lower central series from \cite{EW06, H72}. Let $L^{(0)}=L=L^0$ and  $L^{(1)}=[L,L]=L^1$. Define 
\[
L^{(n)}=[L^{(n-1)},L^{(n-1)}] \text{~~~~and~~~~~} L^n=[L,L^{n-1}]
\]
 for all $n\in\zz_{\geq2}$. Now, we focus on characterization of finite-dimensional solvable Lie algebras $L$ of breadth 2. As $L$ is nonabelian, we let $k\in\zz_{\geq2}$ be the smallest integer such that $L^{(k)}=\{0\}$ while $L^{(k-1)}\neq\{0\}$. As a result, $L^{(k-1)}$ is a proper abelian ideal of $L$. The next lemma provides a key step in transition from the class of nilpotent Lie algebras to the class of solvable Lie algebras.

\medskip

The remaining statements in this section are over the complex field $\mathbb{C}$.
%%%%%%%%%%%%%%%%%%%%%%%%%%%%%%%%%%%%%%%%%%%%%%

\begin{lemma}\label{centralizer}
Let $L$ be a nonabelian solvable complex Lie algebra and $A$ be a maximal abelian ideal of $L$. Then $C_L(A)=A$.
\end{lemma}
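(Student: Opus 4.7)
The inclusion $A\subseteq C_L(A)$ is immediate from the abelianness of $A$, so the content lies in showing $C_L(A)\subseteq A$. My plan is to argue by contradiction: assume $C_L(A)\supsetneq A$ and manufacture an abelian ideal of $L$ properly containing $A$, contradicting the maximality of $A$.

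The first preparatory step is to note that $C_L(A)$ is itself an ideal of $L$, which is a one-line Jacobi check: for $c\in C_L(A)$, $y\in L$ and $a\in A$, the element $[y,a]$ lies in $A$ because $A$ is an ideal, so
\[
[[c,y],a]=[[c,a],y]+[c,[y,a]]=0,
\]
and $[c,y]\in C_L(A)$. Consequently $C_L(A)/A$ is a nonzero ideal of the solvable quotient $L/A$, and $L/A$ acts on $C_L(A)/A$ by the adjoint representation.

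The decisive step, which I expect to be the main obstacle, is to extract a one-dimensional $L$-invariant subspace $\ff\bar{x}\subseteq C_L(A)/A$. I would obtain this by applying a Lie-type theorem to the representation of the solvable algebra $L/A$ on the finite-dimensional module $C_L(A)/A$ --- this is the place where the field must be sufficiently nice (in particular it is valid in the complex setting, which is the ambient context of the subsequent classification in the paper).

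Granted such $\bar{x}$, I lift to $x\in C_L(A)\setminus A$ and set $D=A+\ff x$. Because $\ff\bar{x}$ is $L$-invariant in $L/A$, $D$ is an ideal of $L$; because $x\in C_L(A)$ and $A$ is abelian,
\[
[D,D]=[A,A]+[A,\ff x]+[\ff x,\ff x]=0,
\]
so $D$ is abelian. Thus $D$ is an abelian ideal of $L$ with $A\subsetneq D$, contradicting the maximality of $A$ and completing the argument.
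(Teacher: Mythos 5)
Your proposal is correct and follows essentially the same route as the paper: show $C_L(A)$ is an ideal, apply Lie's theorem to the induced action of the solvable algebra on $C_L(A)/A$ to get a common eigenvector $x\notin A$, and derive a contradiction from the abelian ideal $A+\ff x\supsetneq A$. The only cosmetic difference is that the paper splits off the case where $A+\ff x=L$ (using the nonabelian hypothesis there), whereas you fold everything into the maximality contradiction; your remark that Lie's theorem needs the field to be sufficiently nice is a fair caveat that applies equally to the paper's own proof.
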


\begin{proof}
Since $A$ is abelian, $A\subseteq C_L(A)$. Conversely, we suppose that $A\subsetneq C_L(A)$. Since $A$ is an ideal, $C_L(A)$ is also an ideal of $L$ and $\quo{C_L(A)}{A}\neq\{A\}$. Consider an adjoint representation of $L$ on $\quo{C_L(A)}{A}$ given by $x\cdot(y+A)=[x,y]+A$ where $x\in L$ and $y+A\in\quo{C_L(A)}{A}$. It is well-defined since $A$ is an ideal. As $L$ is solvable, $\quo{C_L(A)}{A}$ has a common eigenvector $z+A\neq A$ by Lie's theorem. Then $x\cdot(z+A)=\alpha_x(z+A)$ where $\alpha_x\in\ff$. Thus $[x,z]-\alpha_xz\in A$, so $[x,z]\in A\oplus\spann\{z\}$. Observe that $z\in C_L(A)$ and $z\notin A$, so $A\oplus\spann\{z\}$ is an abelian ideal containing $A$. If $A$ has codimension 1 in $L$, then $L=A\oplus\spann\{z\}$ is abelian, a contradiction. On the other hand, if $A$ has codimension greater than 1 in $L$, then $A\oplus\spann\{z\}$ is a proper abelian ideal containing $A$ which contradicts the maximality of $A$. Consequently, $C_L(A)=A$.
\end{proof}

%%%%%%%%%%%%%%%%%%%%%%%%%%%%%%%%%%%%%%%%%%%%%%

By applying Lemma \ref{centralizer}, the following results and their proofs can be directly generalized from the class of nilpotent Lie algebras in \cite{KMS15} to the class of solvable complex Lie algebras in this paper. In fact, the following Lemmas \ref{NLSB3.3}--\ref{NLSB3.6} correspond to \cite[Lemmas 3.3--3.6]{KMS15}, respectively.

%%%%%%%%%%%%%%%%%%%%%%%%%%%%%%%%%%%%%%%%%%%%%%

\begin{lemma}\label{NLSB3.3}%[mod from NLSB Lemma 3.3]
Let $L$ be a solvable complex Lie algebra of breadth 2, $A$ be a maximal abelian ideal of $L$ and $b_A(L)=2$. Let $x, y, z\in L$ such that $y-z\neq A$ and $b_A(x)=2$. Then at least one of the elements $y, z, y+z, x+y, x+z, x+y+z$ is not in $T_A$.
\end{lemma}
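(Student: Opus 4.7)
The plan is to argue by contradiction: assume all six of $y,z,y+z,x+y,x+z,x+y+z$ lie in $T_A$, so each acts on $A$ as a rank-$1$ operator. Write $V := A$, $M_u := \ad_u|_A$, $K_u := \ker M_u$ for $u \in L$, so $\dim V/K_x = 2$ while $\dim V/K_u = 1$ for each of our six elements. The first task is to show $K_x \subseteq K_u$ for every one of the six. For any such $u$, the identity $M_x = M_{x+u} - M_u$ vanishes on $K_u \cap K_{x+u}$, giving $K_u \cap K_{x+u} \subseteq K_x$; combined with $\dim(K_u \cap K_{x+u}) \ge \dim V - 2 = \dim K_x$, this forces $K_u \cap K_{x+u} = K_x$, and in particular $K_x \subseteq K_u$.

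The second task is to show $\im M_u \subseteq W := \im M_x$ for each of the six. Pick $v \in K_y \setminus K_x$ (nonempty since $K_x \subsetneq K_y$): then $M_{x+y}(v) = M_x(v) \ne 0$ lies in $W$ and spans the one-dimensional $\im M_{x+y}$, so $\im M_{x+y} \subseteq W$, whence $\im M_y = \im(M_{x+y}-M_x) \subseteq W$; the same reasoning applies to $z$, and linearity handles the remaining elements. With both containments established, transport everything to $W$: let $\tau : V/K_x \xrightarrow{\sim} W$ be the isomorphism induced by $M_x$, and for $u$ among our seven relevant elements define $P_u := \widehat{M}_u \circ \tau^{-1} \in \mathrm{End}(W)$, where $\widehat{M}_u : V/K_x \to W$ is the induced factor map. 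Then $P_x = \mathrm{id}_W$, $P_{u+v} = P_u + P_v$, and each $P_u$ associated to one of the six is of rank $1$.

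The endgame is a trace calculation. For $u \in \{y,z,y+z\}$, the rank-$1$ constraint on $P_{x+u} = \mathrm{id}_W + P_u$ forces $-1$ to be an eigenvalue of the rank-$1$ operator $P_u$; together with the forced $0$ eigenvalue, this pins down $\tr(P_u) = -1$. Additivity of trace then yields the contradiction $-1 = \tr(P_{y+z}) = \tr(P_y) + \tr(P_z) = -2$ in $\ff$. The principal obstacle is the bookkeeping of the first two steps --- one must verify that all the relevant kernels contain $K_x$ and all the relevant images sit inside $W$, so that the seven operators can legitimately be pushed down to endomorphisms of the same two-dimensional space; the hypothesis $y-z \notin A$, which by Lemma~\ref{centralizer} is equivalent to $M_y \ne M_z$, is the natural non-degeneracy condition guaranteeing that the configuration we assume for contradiction is not already trivial.
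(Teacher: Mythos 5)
Your proof is correct. Note first that the paper gives no argument for this lemma at all: it asserts that the proof of \cite[Lemma 3.3]{KMS15} carries over verbatim to the solvable setting once Lemma~\ref{centralizer} supplies $C_L(A)=A$, so your write-up is being measured against a proof-by-citation rather than explicit text. Your reduction is sound: the kernel step forces $K_x=K_u\cap K_{x+u}$ for $u\in\{y,z,y+z\}$, the image step puts all six images inside $W=\im(\ad_x|_A)$, and the endgame $\tr(P_y)=\tr(P_z)=\tr(P_{y+z})=-1$ against $\tr(P_{y+z})=\tr(P_y)+\tr(P_z)=-2$ is a genuine contradiction over any field. Two remarks. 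The phrase ``for any such $u$'' in the kernel step is slightly off: for $u=x+y$ the element $x+u=2x+y$ is not among the six, so the identity $M_x=M_{x+u}-M_u$ cannot be applied verbatim there; but the three cases $u\in\{y,z,y+z\}$ already yield $K_x\subseteq K_{x+u}$ as well, so nothing is lost --- just say so. More substantively, your argument never actually uses the hypothesis $y-z\notin A$ (nor the maximality or abelianness of $A$, nor Lemma~\ref{centralizer}): your closing sentence casts $M_y\neq M_z$ as the non-degeneracy condition that keeps the assumed configuration from being trivial, but no step invokes it, and your trace computation excludes even the degenerate case $M_y=M_z$, which would require $\tr(P_y)=-1$ and $\tr(2P_y)=-1$ simultaneously. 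You have therefore proved a strictly stronger statement valid for an arbitrary ideal $A$; that is not an error, but the last sentence misdescribes your own proof, and you should either delete the claim about the role of the hypothesis or state explicitly that your argument does not need it.
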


%%%%%%%%%%%%%%%%%%%%%%%%%%%%%%%%%%%%%%%%%%%%%%

\begin{lemma}\label{NLSB3.4}\cite[Lemma 3.4]{KMS15}%[NLSB Lemma 3.4]
~Let $L$ be a complex Lie algebra of breadth 2 and $A$ be an abelian ideal of $L$. Suppose that $b_A(L)=2$. Then $\dim[A,L]=2$.
\end{lemma}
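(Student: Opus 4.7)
The plan is to argue by contradiction. Assume $\dim[A, L] \geq 3$. Since $b_A(L) = 2$, fix $x \in L$ with $\dim[x, A] = 2$. Since $\dim[A, L] > \dim[x, A]$, there exist $y \in L$ and $a_3 \in A$ with $[y, a_3] \notin [x, A]$. The objective is to construct an element $z \in L$ such that $b(z) \geq 3$, which would contradict the hypothesis $b(L) = 2$.

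The natural choice is $z = x + y$. I would first attempt to select $a_1, a_2 \in \ker(\ad_y|_A)$ so that $[x, a_1], [x, a_2]$ form a basis of $[x, A]$. This is achievable precisely when $\ker(\ad_y|_A) + \ker(\ad_x|_A) = A$, which holds unless $\ker(\ad_x|_A) \subseteq \ker(\ad_y|_A)$. In this main case, the identities $[z, a_1] = [x, a_1]$, $[z, a_2] = [x, a_2]$, and $[z, a_3] = [x, a_3] + [y, a_3]$ produce three linearly independent vectors in $[z, A]$, giving $b_A(z) \geq 3$ and hence $b(z) \geq 3$, the desired contradiction.

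The remaining case is $\ker(\ad_x|_A) \subseteq \ker(\ad_y|_A)$, where $\ad_y|_A$ factors through $\ad_x|_A$, so every linear combination of $\ad_x|_A$ and $\ad_y|_A$ has image of dimension at most $2$ and no third direction can be extracted from within $A$. I would then look outside $A$: compute $[z, x] = -[x, y]$. If $[x, y] \notin A$, this element is linearly independent from $[z, A] \subseteq A$, so $\ad_z(L)$ still attains dimension at least $3$. Otherwise $[x, y] \in A$, which forces $\ad_x$ and $\ad_y$ to commute on $A$; in this rigid situation one either perturbs $z$ by a suitable element of $A$ (or replaces $x$ by $\alpha x + a$ with $a \in A$) to break the commutativity, or embeds $A$ inside a maximal abelian ideal and combines Lemma \ref{centralizer} with the machinery of Lemma \ref{NLSB3.3}.

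The main obstacle is this final subcase, in which both $\ker(\ad_x|_A) \subseteq \ker(\ad_y|_A)$ and $[x, y] \in A$ hold simultaneously. Here no simple linear combination of $x$ and $y$ directly produces $b(\cdot) \geq 3$ and the factored structure $\ad_y|_A = \psi \circ \ad_x|_A$ together with the commutativity $[\ad_x, \ad_y]|_A = 0$ must be exploited via Jacobi to unearth an extra independent bracket. This mirrors the corresponding delicate step in \cite{KMS15}, with the role of the nilpotent center-argument replaced by Lemma \ref{centralizer} for the solvable setting; the hypothesis $\charr(\ff) \neq 2$ ensures the scalar normalizations required for these manipulations remain valid.
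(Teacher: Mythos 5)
The paper does not actually prove this lemma: it is quoted verbatim from \cite[Lemma 3.4]{KMS15} (note that, unlike Lemmas \ref{NLSB3.3}, \ref{NLSB3.5} and \ref{NLSB3.6}, this statement involves neither solvability of $L$ nor maximality of $A$, so no generalization is needed). Your overall strategy --- assume $\dim[A,L]\geq 3$ and manufacture an element $z$ with $b_A(z)\geq 3$ --- is the right one and is in the spirit of the cited proof, and your ``main case'' is carried out correctly. But the case analysis after that has genuine holes. First, the dichotomy is false: $\ker(\ad_x|_A)+\ker(\ad_y|_A)=A$ can fail without $\ker(\ad_x|_A)\subseteq\ker(\ad_y|_A)$; for instance both kernels can have codimension $2$ in $A$ while their intersection has codimension $3$, so that the sum has codimension $1$. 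In that regime $[x,\ker(\ad_y|_A)]$ is only one-dimensional, so your choice of $a_1,a_2$ is impossible and $z=x+y$ yields only two independent brackets; at the same time $\ad_y|_A$ does \emph{not} factor through $\ad_x|_A$, so your ``remaining case'' analysis does not apply either. This entire intermediate situation is simply not covered.

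Second, even within the case $\ker(\ad_x|_A)\subseteq\ker(\ad_y|_A)$, writing $\ad_y|_A=\psi\circ\ad_x|_A$, the step ``if $[x,y]\notin A$ then $\ad_z(L)$ attains dimension at least $3$'' silently assumes $\dim[z,A]\geq 2$; but $\ad_{x+y}|_A$ has image $(\mathrm{id}+\psi)([x,A])$, which can be one-dimensional (take $\psi$ to send one basis vector of $[x,A]$ to its negative and the other outside $[x,A]$), so the conclusion does not follow as stated. Finally, the subcase you identify as ``the main obstacle'' is not proved at all --- you describe what ``must be exploited'' rather than doing it, and the tools you propose to fall back on are not available here: Lemma \ref{centralizer} requires $L$ solvable and $A$ a \emph{maximal} abelian ideal, neither of which is hypothesized in this lemma. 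As it stands the argument establishes only the transverse case and leaves the degenerate configurations, which are exactly where the content of the lemma lies, unresolved.
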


%%%%%%%%%%%%%%%%%%%%%%%%%%%%%%%%%%%%%%%%%%%%%%

\begin{lemma}\label{NLSB3.5}%[mod from NLSB Lemma 3.5]
Let $L$ be a solvable complex Lie algebra of breadth 2 and $A$ be a maximal abelian ideal of $L$. Suppose that $b_A(L)=2$ and $[x,L]\subseteq[A,L]$ for all $x\in L$ with $b_A(x)=2$. Then $[L,L]=[A,L]$ and $\dim[L,L]=2$.
\end{lemma}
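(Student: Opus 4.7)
The plan is to prove $[L,L]\subseteq[A,L]$, since $[A,L]\subseteq[L,L]$ is automatic and the dimension claim $\dim[L,L]=2$ then follows immediately from Lemma~\ref{NLSB3.4}. So it suffices to show $[u,v]\in[A,L]$ for every $u,v\in L$. If $b_A(u)=2$, the hypothesis gives $[u,v]\in[u,L]\subseteq[A,L]$ directly, and symmetrically when $b_A(v)=2$. If $b_A(u)=0$, then $\ad_u|_A=0$ places $u\in C_L(A)$, which by Lemma~\ref{centralizer} equals $A$; so $u\in A$ and again $[u,v]\in[A,L]$. This reduces the problem to the case $u,v\in T_A$.

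For $u,v\in T_A$, I would mimic the strategy of \cite[Lemma~3.5]{KMS15}. Fix $x\in L$ with $b_A(x)=2$ (which exists since $b_A(L)=2$) and assume for contradiction that $[u,v]\notin[A,L]$. First observe that $u-v\notin A$: otherwise $u=v+a$ with $a\in A$ would give $[u,v]=[a,v]\in[A,L]$, contradicting the assumption. Lemma~\ref{NLSB3.3} applied to the triple $(x,u,v)$ then forces at least one of $u+v$, $x+u$, $x+v$, $x+u+v$ to lie outside $T_A$ (since $u$ and $v$ themselves are in $T_A$); call such an element $w$, so that $b_A(w)\in\{0,2\}$.

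In each subcase I expect to recover $[u,v]$ as a combination of brackets already known to lie in $[A,L]$. For a representative case, if $w=x+u$ and $b_A(w)=2$, the hypothesis gives $[x+u,v]\in[A,L]$, and since $b_A(x)=2$ also gives $[x,v]\in[A,L]$, subtraction yields $[u,v]=[x+u,v]-[x,v]\in[A,L]$, a contradiction. If instead $b_A(w)=0$, then $w\in C_L(A)=A$ by Lemma~\ref{centralizer}, so $[w,L]\subseteq[A,L]$, and the same two-line subtraction still isolates $[u,v]$. The three other choices $w\in\{u+v,\,x+v,\,x+u+v\}$ go through by an analogous rearrangement, using only the hypothesis on elements of breadth $2$ and the fact that $w\in A$ when $b_A(w)=0$. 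The main obstacle is the disciplined bookkeeping across these eight small subcases — each demands its own short identity — but no structural ingredient beyond the hypothesis, Lemma~\ref{centralizer}, and Lemma~\ref{NLSB3.3} should be required.
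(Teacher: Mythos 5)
Your proposal is correct and follows exactly the route the paper intends: the paper omits the proof of Lemma~\ref{NLSB3.5}, stating only that it is the argument of \cite[Lemma~3.5]{KMS15} generalized via Lemma~\ref{centralizer}, and your reconstruction — dispatching $b_A(u)=2$ by the hypothesis, $b_A(u)=0$ by $C_L(A)=A$, and the remaining case $u,v\in T_A$ by Lemma~\ref{NLSB3.3} together with bilinearity of the bracket — is precisely that generalization. All subcases check out (in each one $[w,\cdot]\subseteq[A,L]$ whether $b_A(w)=2$ or $w\in A$), and the dimension claim does follow from Lemma~\ref{NLSB3.4} as you say.
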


%%%%%%%%%%%%%%%%%%%%%%%%%%%%%%%%%%%%%%%%%%%%%%

\begin{lemma}\label{NLSB3.6}%[mod from NLSB Lemma 3.6]
Let $L$ be a solvable complex Lie algebra of breadth 2 and $A$ be a maximal abelian ideal of $L$. Suppose $b_A(L)=1$. Then one of the following holds:
\begin{enumerate}
\item $\dim[A,L]=1$ and $b\left(\quo{L}{[A,L]}\right)<2$, or
\item $\dim\left(\quo{A}{Z(L)}\right)=1$ and $\dim\left(\quo{L}{Z(L)}\right)\leq3$.
\end{enumerate}
\end{lemma}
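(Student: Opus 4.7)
The plan is to split on $\dim[A,L]$, which must be at least $1$: if $\dim[A,L]=0$, then $A\subseteq Z(L)\subseteq C_L(A)=A$ by Lemma~\ref{centralizer}, so $L=C_L(A)=A$ is abelian, contradicting $b(L)=2$. Call Case (a) the situation $\dim[A,L]=1$ (which will yield conclusion (1)) and Case (b) the situation $\dim[A,L]\ge 2$ (which will yield conclusion (2)).

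In Case (a), set $N=[A,L]$ and assume for contradiction that $b(L/N)\ge 2$. A lift $x\in L$ of a breadth-$2$ element of $L/N$ must satisfy $\dim[x,L]=2$ and $[x,L]\cap N=\{0\}$, since $b(L)\le 2$ and $\dim N=1$. The hypothesis $b_A(x)\le 1$ forces $[x,A]\subseteq N$, and $[x,A]\subseteq[x,L]$ as well, so $[x,A]\subseteq[x,L]\cap N=\{0\}$. Then Lemma~\ref{centralizer} places $x$ in $A$, giving $[x,L]\subseteq[A,L]=N$, in contradiction with $[x,L]\cap N=\{0\}$ and $\dim[x,L]=2$.

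In Case (b), the key step is to locate a codimension-$1$ subspace $A'\subseteq A$ annihilated by every $\ad_x|_A$ and thus contained in $Z(L)$. Since $\dim[A,L]\ge 2$, there exist $x_1,x_2\in L$ for which $\ad_{x_i}|_A$ has rank $1$ with distinct images $V_1\ne V_2$. The rank-$\le 1$ constraint on $\ad_{x_1+x_2}|_A$ then forces $\ker(\ad_{x_1}|_A)=\ker(\ad_{x_2}|_A)=:A'$: otherwise, picking $a_i$ in one kernel but not the other produces two linearly independent vectors in the image of $\ad_{x_1+x_2}|_A$, one in $V_2$ and one in $V_1$. The same pairwise argument, applied to any other $x\in L$ paired with whichever of $x_1,x_2$ has image distinct from $\im(\ad_x|_A)$ (the case $\ad_x|_A=0$ being trivial), shows $A'\subseteq\ker(\ad_x|_A)$, so $A'\subseteq Z(L)$. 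Because $Z(L)\subseteq C_L(A)=A$ by Lemma~\ref{centralizer} and $Z(L)\ne A$ (otherwise $[A,L]=0$), one concludes $Z(L)=A'$ and $\dim(A/Z(L))=1$. Finally, any $a\in A\setminus A'$ satisfies $[A,L]=[a,L]$, so $\dim[a,L]=2$ using $b(L)\le 2$; any $y\in C_L(a)\setminus A$ would centralize both $A'$ and $a$, hence all of $A$, contradicting $C_L(A)=A$. Thus $C_L(a)=A$ and $\dim(L/Z(L))=\dim(L/A)+\dim(A/A')=2+1=3$.

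The main obstacle is the rank-$1$ matrix analysis in Case (b): extracting a single codimension-$1$ kernel common to the entire family $\{\ad_x|_A\}_{x\in L}$, each member of which individually has rank at most $1$, requires a Flanders-type dichotomy between \emph{row-type} and \emph{column-type} rank-$\le 1$ subspaces of $\mathrm{End}(A)$, and the hypothesis $\charr(\ff)\ne 2$ (ensuring $|\ff|\ge 3$) is precisely what prevents pathological mixed-type families from arising.
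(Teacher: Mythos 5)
Your proof is correct and is essentially the paper's own route: the paper omits this argument and defers to \cite[Lemma 3.6]{KMS15}, and your case split on $\dim[A,L]$ (quotienting by the one-dimensional $[A,L]$ in the first case; extracting the common codimension-one kernel of the rank-$\le 1$ family $\{\ad_x|_A\}$ and identifying it with $Z(L)$ in the second), with Lemma~\ref{centralizer} ($C_L(A)=A$) supplying the solvable-case substitute at each step, is exactly that generalization. One small correction to your closing remark: the ``common kernel versus common image'' dichotomy you use is established directly by your observation that $\ad_{x_1+x_2}|_A$ would otherwise have rank $2$, which is valid over any field, so nothing in your argument actually requires $\charr(\ff)\neq 2$.
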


%%%%%%%%%%%%%%%%%%%%%%%%%%%%%%%%%%%%%%%%%%%%%%

Analogous to \cite[Theorem 3.1]{KMS15}, Lemmas \ref{NLSB3.3}–\ref{NLSB3.6} yield a characterization of finite-dimensional solvable complex  Lie algebras of breadth $2$. We omit the proof, as it follows the same approach as that of \cite[Theorem 3.1]{KMS15}.

%%%%%%%%%%%%%%%%%%%%%%%%%%%%%%%%%%%%%%%%%%%%%%

\begin{theorem}\label{breadthtwo}%[mod from NLSB Theorem 3.1]
Let $L$ be a finite-dimensional solvable complex Lie algebra. Then $b(L)=2$ if and only if one of the following conditions holds:
\begin{enumerate}
\item[(S1)] $\dim[L,L]=2$, or
\item[(S2)] $\dim[L,L]=3$ and $\dim\left(\quo{L}{Z(L)}\right)=3$.
\end{enumerate}
\end{theorem}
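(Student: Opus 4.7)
The plan is to mirror the argument of \cite[Theorem 3.1]{KMS15}, treating the two implications separately. For the backward direction, suppose (S1) or (S2). In either case $\dim[L,L]\geq 2$, so Theorem \ref{breadthone} rules out $b(L)\leq 1$ and we have $b(L)\geq 2$. For the upper bound, (S1) gives $b(L)\leq\dim[L,L]=2$ directly, while (S2) uses the sharper bound $b(x)=\dim L-\dim C_L(x)\leq\dim(L/Z(L))-1=2$, valid because $C_L(x)\supseteq Z(L)+\spann\{x\}$ strictly contains $Z(L)$ whenever $x\notin Z(L)$.

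For the forward direction, assume $b(L)=2$ and fix a maximal abelian ideal $A$ of $L$, which exists since $L$ is non-abelian solvable. By Lemma \ref{centralizer} we have $C_L(A)=A$, forcing $Z(L)\subseteq A$ and $b_A(L)\geq 1$ (else $A\subseteq Z(L)$ would give $L$ abelian). I would then split on $b_A(L)\in\{1,2\}$. When $b_A(L)=2$, Lemma \ref{NLSB3.4} yields $\dim[A,L]=2$. If every $x\in L$ with $b_A(x)=2$ satisfies $[x,L]\subseteq[A,L]$, Lemma \ref{NLSB3.5} gives $\dim[L,L]=2$, placing $L$ in (S1); otherwise I would apply Lemma \ref{NLSB3.3} to control the extra commutators produced by some $x$ with $[x,L]\not\subseteq[A,L]$, yielding $\dim[L,L]=3$ and $\dim(L/Z(L))=3$, i.e.\ (S2). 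When $b_A(L)=1$, Lemma \ref{NLSB3.6} gives two subcases: in (i), $\dim[A,L]=1$ and $b(L/[A,L])<2$ yield $\dim[L,L]\leq\dim[A,L]+1=2$, which combined with $b(L)=2$ forces $\dim[L,L]=2$ and hence (S1); in (ii), the hypotheses $\dim(A/Z(L))=1$ and $\dim(L/Z(L))\leq 3$ combine with the lower bound $\dim(L/Z(L))\geq 3$ noted in the backward direction to give $\dim(L/Z(L))=3$, and writing $A=Z(L)+\spann\{a\}$ and $L=A+\spann\{y_1,y_2\}$ exhibits $[L,L]=\spann\{[a,y_1],[a,y_2],[y_1,y_2]\}$ of dimension in $\{2,3\}$, landing in (S1) or (S2) respectively.

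The main obstacle will be the sub-branch $b_A(L)=2$ with some $x$ satisfying $b_A(x)=2$ but $[x,L]\not\subseteq[A,L]$. There Lemma \ref{NLSB3.3} is needed to exploit the alternatives ``one of $y,z,y+z,x+y,x+z,x+y+z$ is not in $T_A$'' together with the Jacobi identity, in order to simultaneously bound $\dim[L,L]$ from above and force $\dim(L/Z(L))=3$. This is precisely the delicate step of \cite[Theorem 3.1]{KMS15} that the authors' lemmas are tailored to handle, and the key conceptual ingredient is that every use of nilpotency in \cite{KMS15} (in particular, the fact that every ideal meets the center nontrivially) is replaced by the self-centralizing property $C_L(A)=A$ for a maximal abelian ideal $A$ supplied by Lemma \ref{centralizer}.
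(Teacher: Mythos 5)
Your proposal follows exactly the route the paper intends: the paper omits the proof of Theorem \ref{breadthtwo} precisely because it ``follows the same approach as that of [Theorem 3.1]{KMS15}'', namely the case split on $b_A(L)\in\{1,2\}$ for a maximal abelian ideal $A$ with $C_L(A)=A$ (Lemma \ref{centralizer}) and the deployment of Lemmas \ref{NLSB3.3}--\ref{NLSB3.6} in the roles you assign them, together with the elementary bounds $b(x)\leq\dim[L,L]$ and $b(x)\leq\dim(L/Z(L))-1$ for the converse. The one step you leave as a sketch (the sub-branch where some $x$ with $b_A(x)=2$ has $[x,L]\not\subseteq[A,L]$, handled via Lemma \ref{NLSB3.3}) is exactly the step the paper also defers to \cite{KMS15}, so your outline matches the paper's argument in both structure and substance.
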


%%%%%%%%%%%%%%%%%%%%%%%%%%%%%%%%%%%%%%%%%%%%%%

%%%%%%%%%%%%%%%%%%%%%%%%%%%%%%%%

\section{Classification of solvable Lie algebras of breadth}\label{classification}
\vspace{\baselineskip}
For the rest of this paper, we focus on a classification of finite-dimensional pure nonnilpotent solvable Lie algebra over the field of complex numbers $\mathbb{C}$ of breadth 2. We begin this section by classifying the second case of Theorem \ref{breadthtwo}. Recall that for any Lie algebra $L$, $[C_L([L,L]),C_L([L,L])]\subseteq Z(L)$ and any 3-dimensional nilpotent Lie algebra is either abelian or Heisenberg. For a Lie algebra $L$ over $\mathbb{C}$, $L$ is solvable if and only if $[L,L]$ is nilpotent.

%%%%%%%%%%%%%%%%%%%%%%%%%%%%%%%%%%%%%%%%%%%%%%

\subsection{Solvable Lie algebra of type (S2)}

%%%%%%%%%%%%%%%%%%%%%%%%%%%%%%%%%%%%%%%%%%%%%%

\begin{lemma}\label{lemmaheisenberg}
Let $L$ be a pure nonnilpotent solvable Lie algebra of breadth 2 such that $\dim[L,L]=3$ and $\dim\left(\quo{L}{Z(L)}\right)=3$. Then $[L,L]$ is Heisenberg.
\end{lemma}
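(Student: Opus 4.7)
The plan is to reduce to showing $[L,L]$ is not abelian, then force a contradiction by case analysis on $\dim Z(L)$. The reduction is immediate from the two remarks preceding the lemma: solvability of $L$ over $\mathbb{C}$ makes $[L,L]$ nilpotent, and a $3$-dimensional nilpotent Lie algebra is either abelian or Heisenberg. So I would aim to contradict the assumption that $[L,L]$ is abelian.

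Assume this. Purity yields $Z(L)\subseteq[L,L]$, so $d:=\dim Z(L)\in\{0,1,2,3\}$. I would eliminate the extreme values first. If $d=3$, then $[L,L]\subseteq Z(L)$, making $L$ nilpotent of class at most $2$, contrary to hypothesis. If $d=0$, then $\dim L=3=\dim[L,L]$, so $L$ is perfect; since $L$ is solvable this forces $L=0$. The middle values $d=1$ and $d=2$ require a direct basis calculation, and $d=2$ will be the main obstacle.

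For $d=1$: extend a basis $\{e_1,e_2,e_3\}$ of $[L,L]$ with $Z(L)=\mathbb{C}e_1$ to a basis of $L$ by a single vector $w$. Because $[L,L]$ is abelian and $e_1$ is central, the only brackets of basis vectors that can be nonzero are $[w,e_2]$ and $[w,e_3]$, giving $\dim[L,L]\leq 2$, a contradiction.

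For $d=2$: extend $Z(L)=\spann\{e_1,e_2\}$ to a basis $\{e_1,e_2,e_3\}$ of $[L,L]$ and further to a basis $\{e_1,e_2,e_3,w_1,w_2\}$ of $L$. Then $[L,L]$ is spanned by $[w_1,e_3]$, $[w_2,e_3]$, $[w_1,w_2]$, and these three vectors must be linearly independent. Writing them in the $\{e_1,e_2,e_3\}$ basis with coefficients $(a_i),(b_i),(c_i)$, I would apply the Jacobi identity to $(w_1,w_2,e_3)$; using that $[L,L]$ is abelian and $e_1,e_2$ are central, this collapses to $a_3(b_1,b_2,b_3)=b_3(a_1,a_2,a_3)$. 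Then a short case split finishes things. If $a_3=b_3=0$, the $e_3$-components of $[w_1,e_3]$ and $[w_2,e_3]$ vanish, so both lie in $Z(L)$; hence $L^2\subseteq Z(L)$ and $L^3=0$, contradicting nonnilpotency. Otherwise, say $a_3\neq 0$; replacing $w_2$ by $w_2-(b_3/a_3)w_1$ kills the $e_3$-coefficient of $[w_2,e_3]$, and the Jacobi relation simultaneously forces the $(e_1,e_2)$-part to vanish, so $[w_2,e_3]=0$, collapsing $\dim[L,L]\leq 2$, a contradiction. This last subcase is the heart of the argument—it is where the nonnilpotency hypothesis is actually used and where the Jacobi identity plays a decisive role.
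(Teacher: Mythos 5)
Your proposal is correct and follows essentially the same route as the paper: reduce to ruling out the abelian case, split on $\dim Z(L)$, dispose of the extreme values quickly, and in the $\dim Z(L)=2$ case apply the Jacobi identity to the triple consisting of the non-central generator of $[L,L]$ and the two complementary vectors (your $(e_3,w_1,w_2)$ is the paper's $(w,x,y)$), using nonnilpotency exactly where you use it. The only cosmetic difference is order of operations: the paper first invokes nonnilpotency to find $u$ with $[w,u]\notin Z(L)$ and then normalizes before applying Jacobi, whereas you extract the full proportionality relation from Jacobi first and then case-split.
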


\begin{proof}
Assume that $[L,L]$ is not Heisenberg. Since $L$ is solvable, $[L,L]$ is a 3-dimensional nilpotent Lie algebra, so $[L,L]$ is abelian. Because $L$ is pure and nonnilpotent, we have $\dim Z(L)=0, 1$ or 2. If $\dim Z(L)=0$, then $\dim L=3$ and $L=[L,L]$. Therefore $L$ is not solvable, a contradiction. If $\dim Z(L)=1$, then $\dim L=4$ and $Z(L)=\spann\{z\}$ for some $0\neq z\in L$. Extend $Z(L)$ to a basis $\{x,y,z\}$ of $[L,L]$ and choose $w\in L\setminus[L,L]$ so that $L=\spann\{w,x,y,z\}$. As $[L,L]$ is abelian, we get $[x,y]=0$. Thus $[L,L]=\spann\{[w,x],[w,y]\}$, which contradicts the dimension of $[L,L]$. As a result, we have $\dim Z(L)=2$. Let $z_1,z_2\in L$ be such that $Z(L)=\spann\{z_1,z_2\}$. Then we extend $Z(L)$ to $[L,L]=\spann\{z_1,z_2,w\}$.\\
\indent Next, we will claim that there exists $u\in L\setminus[L,L]$ such that $[w,u]\notin Z(L)$. Suppose that $[w,u]\in Z(L)$ for all $u\in L\setminus[L,L]$. Extend $[L,L]$ to $L=\spann\{z_1,z_2,w,x,y\}$. Thus $L^2=[L,[L,L]]=\spann\{[w,x],[w,y]\}\subseteq Z(L)$ which implies $L^3=\{0\}$, contradicts nonnilpotency of $L$. By claim, we can extend $[L,L]=\spann\{z_1,z_2,w\}$ to $L=\spann\{z_1,z_2,w,x,y\}$ so that $[w,x]\notin Z(L)$. Scaling $x$ if necessary, we can additionally assume that
$$
[w,x]=\alpha_1z_1+\beta_1z_2+w,~[w,y]=\alpha_2z_1+\beta_2z_2+\gamma_2w \text{ ~~and~~ } [x,y]=\alpha_3z_1+\beta_3z_2+\gamma_3w
$$
where $\alpha_i,\beta_i,\gamma_i\in\cc$. Replacing $y$ by $y-\gamma_2x$, we may assume that $[w,y]=\alpha_2z_1+\beta_2z_2$. Thus $
0=[w,[x,y]]=[[w,x],y]+[x,[w,y]]=[w,y]$, which implies $[L,L]=\spann\{[w,x],[x,y]\}$, a contradiction.
\end{proof}

%%%%%%%%%%%%%%%%%%%%%%%%%%%%%%%%%%%%%%%%%%%%%%

Now, we can determine the desired Lie algebras of type (S2).

%%%%%%%%%%%%%%%%%%%%%%%%%%%%%%%%%%%%%%%%%%%

\begin{theorem}\label{L23}
Let $L$ be a finite-dimensional pure nonnilpotent solvable Lie algebra of breadth 2 such that $\dim[L,L]=3$ and $\dim\left(\quo{L}{Z(L)}\right)=3$. Then $L=\spann\{x_1,x_2,x_3,z\}$ with nonzero brackets given by $[x_1,x_2]=x_2, [x_1,x_3]=-x_3$ and $[x_2,x_3]=z$.
\end{theorem}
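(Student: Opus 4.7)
The plan is to use Lemma \ref{lemmaheisenberg} to pin down the structure of $[L,L]$, then analyze the adjoint action of a complement via Jacobi and Engel, diagonalize over $\cc$, and rescale to obtain the prescribed brackets.

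First I would fix the dimensions. Since $L$ is pure, $Z(L)\subseteq[L,L]$, and since every element of $Z(L)$ commutes with all of $[L,L]$, we have $Z(L)\subseteq Z([L,L])$. By Lemma \ref{lemmaheisenberg}, $[L,L]$ is Heisenberg, so $\dim Z([L,L])=1$ and hence $\dim Z(L)\leq 1$. Combined with $\dim L/Z(L)=3$ and $\dim[L,L]=3$, the possibility $Z(L)=0$ forces $\dim L=3$ and $L=[L,L]$, making $L$ nilpotent, a contradiction. Thus $\dim Z(L)=1$ and $\dim L=4$. Pick a Heisenberg basis $\{x,y,z\}$ of $[L,L]$ with $[x,y]=z$; then $Z(L)=\spann\{z\}$. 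Choose $w\in L\setminus[L,L]$ and write $[w,x]=\alpha x+\beta y+\gamma z$ and $[w,y]=\delta x+\epsilon y+\zeta z$.

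Next I would extract the key trace condition. Applying Jacobi to $[w,[x,y]]=[w,z]=0$ gives $[[w,x],y]+[x,[w,y]]=(\alpha+\epsilon)z=0$, so $\alpha+\epsilon=0$; that is, $\ad_w$ acts on $[L,L]/\spann\{z\}$ with trace zero. The crucial step is to show $\ad_w|_{[L,L]}$ is not nilpotent. For any $u\in[L,L]$, we have $\ad_u(L)\subseteq[L,L]$ and $\ad_u^2(L)\subseteq[[L,L],[L,L]]=\spann\{z\}$; since $z\in Z(L)$, $\ad_u^3=0$ on $L$. So if $\ad_w|_{[L,L]}$ were nilpotent, then $\ad_w$ would be nilpotent on $L=\spann\{w\}\oplus[L,L]$, and Engel's theorem would force $L$ to be nilpotent, a contradiction. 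Hence the induced traceless $2\times 2$ matrix is not nilpotent, so over $\cc$ it has distinct nonzero eigenvalues $\pm\lambda$; lifting eigenvectors through $\spann\{z\}$ (possible because $\pm\lambda\neq 0$), I obtain $v_1,v_2\in[L,L]$ with $[w,v_1]=\lambda v_1$ and $[w,v_2]=-\lambda v_2$.

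Finally, $[v_1,v_2]\in[[L,L],[L,L]]=\spann\{z\}$, say $[v_1,v_2]=\mu z$. If $\mu=0$, then $\{v_1,v_2,z\}$ would be a pairwise-commuting basis of $[L,L]$, contradicting that $[L,L]$ is Heisenberg; so $\mu\neq 0$. Setting $x_1=w/\lambda$, $x_2=v_1$, $x_3=v_2/\mu$ then produces $[x_1,x_2]=x_2$, $[x_1,x_3]=-x_3$, and $[x_2,x_3]=z$, giving the desired basis. The main obstacle is the Engel reduction showing that $\ad_w|_{[L,L]}$ inherits nonnilpotency from $L$; the dimension count and the final rescaling are routine once the eigenvalues of $\ad_w|_{[L,L]}$ are in hand, and the diagonalization step uses algebraic closedness of $\cc$ in an essential way.
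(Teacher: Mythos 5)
Your argument follows essentially the same route as the paper: invoke Lemma \ref{lemmaheisenberg}, deduce $\dim Z(L)=1$ and $\dim L=4$, use the Jacobi identity on $[w,[x,y]]$ to see that $\ad_w$ induces a traceless operator on $[L,L]/\spann\{z\}$, rule out the degenerate case, diagonalize over $\cc$, and rescale. The dimension count, the lifting of eigenvectors through $\spann\{z\}$ (valid precisely because $\lambda\neq0$), and the final normalization are all correct. The one place you genuinely diverge is in excluding the case where $\ad_w|_{[L,L]}$ is nilpotent: the paper derives a contradiction from $\dim[L,L]=3$ (a nonzero nilpotent traceless $2\times2$ block has one-dimensional image, so $[L,L]=\spann\{[w,x],[w,y],[x,y]\}$ would have dimension at most $2$; and the zero block puts $w$ in $Z(L)$), whereas you derive one from the nonnilpotency of $L$.

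Your version of that step has a gap as written. Engel's theorem requires $\ad_v$ to be nilpotent for \emph{every} $v\in L$, but you only verify this for $v=w$ and for $v\in[L,L]$. A general element has the form $v=cw+u$ with $u\in[L,L]$, and $\ad_v=c\,\ad_w+\ad_u$ is a sum of two noncommuting nilpotent operators; such a sum need not be nilpotent (consider $e+f$ in $\mathfrak{sl}_2$, which is generated by two ad-nilpotent elements without being nilpotent). The implication you want is nevertheless true and the fix is one line: every $\ad_v$ preserves the chain $L\supseteq[L,L]\supseteq\spann\{z\}\supseteq 0$, kills $\spann\{z\}$, maps $L$ into $[L,L]$, and acts on $[L,L]/\spann\{z\}$ as $c\bar{A}$ (since $\ad_u$ maps $[L,L]$ into $[[L,L],[L,L]]=\spann\{z\}$), where $\bar{A}$ is the induced $2\times2$ matrix assumed nilpotent; hence $\ad_v^4=0$ for all $v$ and Engel applies. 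Alternatively, compute the lower central series directly: $L^{k+1}\subseteq\ad_w^k([L,L])+\spann\{z\}$, which stabilizes at $\spann\{z\}$ and then vanishes. With either patch, or by substituting the paper's breadth-based contradiction, your proof is complete.
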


\begin{proof}
By Lemma \ref{lemmaheisenberg}, $[L,L]$ is Heisenberg, so $[L,L]=\spann\{x,y,z\}$ such that $[x,y]=z$ and $[x,z]=0=[y,z]$. Since $L$ is pure, $Z(L)\subseteq[L,L]$, so $Z(L)\subseteq\spann\{z\}$. As before, $\dim Z(L)=0$ is impossible. Hence  $Z(L)=\spann\{z\}=[[L,L],[L,L]]$.\\
\indent Next, we extend $[L,L]=\spann\{x,y,z\}$ to a basis $\{w,x,y,z\}$ of $L$. Then $[w,x]=\alpha_1x+\alpha_2y+\alpha_3z$ and $[w,y]=\beta_1x+\beta_2y+\beta_3z$ where $\alpha_i,\beta_i\in\cc$. Replacing $w$ by $w-\beta_3x+\alpha_3y$, we may assume that $[w,x]=\alpha_1x+\alpha_2y$ and $[w,y]=\beta_1x+\beta_2y$. Let $\bbb=\{x,y\}$ be an ordered basis. Then $V=\spann\:\bbb$ is an $ad_w$-invariant subspace. Observe that $0=[w,z]=[w,[x,y]]=[[w,x],y]+[x,[w,y]]=(\alpha_1+\beta_2)z$, so $\beta_2=-\alpha_1$. Thus $[w,y]=\beta_1x-\alpha_1y$ and $A:=(ad_w\lvert_V)_\bbb=\begin{pmatrix}\alpha_1&\beta_1\\\alpha_2&-\alpha_1\end{pmatrix}$. The eigenvalues of $A$ are $\lambda_1=\sqrt{\alpha_1^2+\alpha_2\beta_1}$ and $\lambda_2=-\lambda_1$. Assume that $\lambda_1=\lambda_2=0$. If $A$ is diagonalizable, then $ad_w\lvert_V=0$. Thus $w\in Z(L)$, a contradiction. As a result, there exists an invertible matrix $P$ such that $PAP^{-1}=\begin{pmatrix}0&1\\0&0\end{pmatrix}$. Thus there exists $0\neq v\in V$ such that $ad_w(v)=0$, so $\spann\{w,v,z\}\subseteq\ker\ad_w$ and $b(w)\leq1$. Thus $[L,L]=\spann\{[w,x],[w,y],[x,y]\}$ has dimension at most two, which is a contradiction.\\
\indent Suppose that $\lambda_1\neq\lambda_2$. Then $A$ is diagonalizable, so there exists an invertible matrix $Q$ such that $QAQ^{-1}=\begin{pmatrix}\lambda_1&0\\0&-\lambda_1\end{pmatrix}$. There exist another basis $\ccc=\{x',y'\}$ of $V$ such that $[w,x']=\lambda_1 x'$ and $[w,y']=-\lambda_1 y'$. Observe that $[L,L]=V\oplus Z(L)$ and $[x',y']\in Z(L)$, so $[x',y']=\gamma z$ for some $\gamma\neq0$. Scaling $z$ if necessary, we can assume that $[x',y']=z$. By setting $x_1=\lambda_1^{-1}w$, $x_2=x'$ and $x_3=y'$, we obtain $L=\spann\{x_1,x_2,x_3,z\}$ with nonzero brackets given by $[x_1,x_2]=x_2, [x_1,x_3]=-x_3$ and $[x_2,x_3]=z$.
\end{proof}

%%%%%%%%%%%%%%%%%%%%%%%%%%%%%%%%%%%%%%%%%%%%%%

\subsection{Solvable Lie algebra of type (S1)}\text{}

Next, we focus on the first case of Theorem \ref{breadthtwo}, which is $\dim[L,L]=2$. Since $L$ is nonnilpotent, we have $\dim L^k\neq0$ for all $k\in\zz_{\geq2}$. Observe that if $\dim[L,[L,L]]=2$, then $[L,[L,L]]=[L,L]$, so dimension of $L^k$ eventually stabilizes at two. Consequently, there are two possibilities, either $\dim L^k=1$ for all $k\in\zz_{\geq2}$ or $\dim L^k=2$ for all $k\in\zz_{\geq2}$.

%%%%%%%%%%%%%%%%%%%%%%%%%%%%%%%%%%%%%%%%%%%%%%

\subsubsection{$\dim L^k=1$ for all $k\in\zz_{\geq2}$}

\begin{lemma}\label{caseb}%[Case B]
Let $L$ be a pure nonnilpotent solvable Lie algebra of breadth 2 such that $\dim[L,L]=2$ and $\dim L^k=1$ for all $k\in\zz_{\geq2}$. Then $\dim Z(L)=1$.
\end{lemma}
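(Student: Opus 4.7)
The plan is to prove $\dim Z(L) \leq 1$ and $\dim Z(L) \geq 1$ separately. The upper bound is short: since $L$ is pure, $Z(L) \subseteq [L,L]$, so $\dim Z(L) \leq 2$; and equality would force $[L,[L,L]] = 0$, contradicting $\dim L^2 = 1$.

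For the lower bound I will produce an explicit nonzero central element. Fix a basis $\{a, b\}$ of $[L, L]$ with $L^2 = \spann\{b\}$. Because $[L, [L, L]] = L^2 \subseteq \spann\{b\}$, both $[x, a]$ and $[x, b]$ lie in $\spann\{b\}$ for every $x \in L$, so I can write $[x, a] = \alpha(x) b$ and $[x, b] = \beta(x) b$ for linear functionals $\alpha, \beta \colon L \to \cc$. An element $\gamma a + \delta b$ then lies in $Z(L)$ iff $\gamma \alpha + \delta \beta \equiv 0$, so it will suffice to show that $\alpha$ and $\beta$ are linearly dependent.

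The key step, which I expect to be the main obstacle, is extracting this linear dependence. My plan is to exploit the fact that $\rho(x) := \ad_x|_{[L, L]}$ is a Lie algebra homomorphism $L \to \gl_2$ whose image consists of matrices of the form $\bigl(\begin{smallmatrix} 0 & 0 \\ \alpha(x) & \beta(x) \end{smallmatrix}\bigr)$ in the ordered basis $(a, b)$. Comparing $\rho([x, y])$ with the matrix commutator $[\rho(x), \rho(y)]$ entry by entry yields the two identities $\beta([x, y]) = 0$ and $\alpha([x, y]) = \beta(x)\alpha(y) - \beta(y)\alpha(x)$. The first identity forces $\beta$ to vanish on $[L,L]$; applied at $x = a$, it gives $[a, b] = \beta(a) b = 0$, so $[L,L]$ is abelian. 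Consequently $\alpha(a) = \alpha(b) = 0$ as well, so $\alpha$ also vanishes on $[L, L]$, and the second identity collapses to the symmetric relation $\beta(x)\alpha(y) = \beta(y)\alpha(x)$ on $L \times L$. Since $\dim L^3 = \dim L^2 = 1$ forces $[L, b] = L^2 \neq 0$ and hence $\beta \not\equiv 0$, fixing $y_0$ with $\beta(y_0) \neq 0$ yields $\alpha = c\beta$ with $c = \alpha(y_0)/\beta(y_0)$; the element $a - cb$ is then a nonzero central element (and the degenerate subcase $\alpha \equiv 0$ gives $a$ itself central). Combined with the upper bound, this yields $\dim Z(L) = 1$.
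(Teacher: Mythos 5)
Your proof is correct, and it takes a genuinely different route from the paper's. The paper gets the upper bound the same way (purity gives $Z(L)\subseteq[L,L]$, so $\dim Z(L)\in\{0,1\}$), but then argues by contradiction: assuming $Z(L)=\{0\}$, it picks $x$ spanning $L^2$, extends to $[L,L]=\spann\{x,y\}$, notes $b(x)=b(y)=1$, and splits into three cases according to whether $[x,y]\neq0$ and whether $\ker\ad_x$ and $\ker\ad_y$ coincide, disposing of each with a separate Jacobi-identity computation. You replace that case analysis with the single observation that $x\mapsto\ad_x|_{[L,L]}$ is a Lie homomorphism into lower-triangular $2\times2$ matrices; reading off its entrywise consequences ($\beta$ vanishes on $[L,L]$, hence $[L,L]$ is abelian, hence $\alpha$ vanishes on $[L,L]$, hence $\beta(x)\alpha(y)=\beta(y)\alpha(x)$) gives proportionality of $\alpha$ and $\beta$ and an explicit nonzero central element $a-cb$. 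This buys several things: the argument is constructive rather than by contradiction, it unifies the paper's three cases (in particular it proves $[L,L]$ abelian once and for all rather than case by case), and it uses neither the breadth-$2$ hypothesis nor nonnilpotency (the latter is anyway implied by $\dim L^k=1$ for all $k\geq2$). The only cosmetic slip is writing $[L,b]=L^2$ where that bracket is literally $L^3$; since $L^3\subseteq L^2$ and both are one-dimensional they coincide, exactly as your parenthetical $\dim L^3=\dim L^2=1$ indicates, so nothing breaks.
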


\begin{proof}
Since $L$ is pure and nonnilpotent, $\dim Z(L)=0$ or $1$. Suppose that $Z(L)=\{0\}$. Let $0\neq x\in [L,[L,L]]$. Then $[L,[L,L]]=\spann\{x\}$. Extend $[L,[L,L]]$ to $[L,L]=\spann\{x,y\}$. Since $Z(L)=\{0\}$, we have $b(x),b(y)\neq0$. As $[x,L],[y,L]\subseteq[[L,L],L]=\spann\{x\}$, we get $b(x)=b(y)=1$, so $\ker\ad_x$ and $\ker\ad_y$ have codimension 1 in $L$. Note that $C_L([L,L])=\ker\ad_x\cap\ker\ad_y$ and $[C_L([L,L]),C_L([L,L])]\subseteq Z(L)=\{0\}$.\\
\indent Suppose that $[x,y]\neq0$. Then $x\notin\ker\ad_y$ and $y\notin\ker\ad_x$, so $L=C_L([L,L])\oplus\spann\{x,y\}$. Let $u\in C_L([L,L])$ and $z\in L$. Then $z$ can be written as $z=v+ax+by$ where $v\in C_L([L,L])$ and $\alpha,\beta\in\cc$. Thus $[u,z]=[u,v+\alpha x+\beta y]=0$. Therefore $C_L([L,L])\subseteq Z(L)=\{0\}$, so $C_L([L,L])=\{0\}$. As a result, $L=\spann\{x,y\}$ and $\dim[L,L]=1$, a contradiction.\\
\indent Assume that $[x,y]=0$ and $\ker\ad_x\neq\ker\ad_y$. Then $x,y\in C_L([L,L])$ and $\dim C_L([L,L])=\dim L-2$, so there exist $u,v\in L$ such that $u\in\ker\ad_x\setminus C_L([L,L])$ and $v\in\ker\ad_y\setminus C_L([L,L])$.  Thus $L=C_L([L,L])\oplus\spann\{u,v\}$, $[x,u]=0$ and $[y,v]=0$. As $0\neq[x,v],[y,u]\in[[L,L],L]=\spann\{x\}$, we may assume that $[x,v]=x$ and $[y,u]=x$. Since $y\in C_L([L,L])$, we have $0=[y,[u,v]]=[[y,u],v]+[u,[y,v]]=[x,v]=x$, a contradiction.\\
\indent Now, suppose that $[x,y]=0$ and  $\ker\ad_x=\ker\ad_y=C_L([L,L])$. Extend $C_L([L,L])$ to $L=C_L([L,L])\oplus\spann\{u\}$. As $0\neq[x,u],[y,u]\in[[L,L],L]=\spann\{x\}$, we have $[x,u]=\alpha x$ and $[y,u]=\beta x$ where $\alpha,\beta\in\cc\setminus\{0\}$. Replacing $y$ by $\beta x-\alpha y$, we may assume that $[y,u]=0$. Let $z\in L$. Then $z$ can be written as $v+\gamma u$ where $v\in C_L([L,L])$ and $\gamma\in\cc$. Thus $[y,z]=[y,v+\gamma u]=0$, so $y\in Z(L)=\{0\}$, a contradiction.\\
\indent Hence $\dim Z(L)=1$ and that completes the proof.
\end{proof}

%%%%%%%%%%%%%%%%%%%%%%%%%%%%%%%%%%%%%%%%%%%%%%
Here, we present the second type of the Lie algebras in the next theorem.
%%%%%%%%%%%%%%%%%%%%%%%%%%%%%%%%%%%%%%%%%%%%%%%%%%%%
\begin{theorem}\label{L22Lk1}%[Case A]
Let $L$ be a finite-dimensional pure nonnilpotent solvable Lie algebra of breadth 2 such that $\dim[L,L]=2$ and $\dim L^k=1$ for all $k\in\zz_{\geq2}$. Then
\begin{enumerate}
\item\label{even}  $n$ even: $L=\spann\{x_1,x_2,z_1,z_2,\ldots,z_n,z\}$ with the nonzero brackets given by $[x_1,x_2]=x_1, [z_i,z_{i+1}]=z$ for all $i=1,3,5,\ldots,n-1$, or
\item\label{odd} $n$ odd: $L=\spann\{x_1,x_2,z_1,z_2,\ldots,z_n,z\}$ with the nonzero brackets given by $[x_1,x_2]=x_1, [x_2,z_1]=z, [z_i,z_{i+1}]=z$ for all $i=2,4,6,\ldots,n-1$.
\end{enumerate}
\end{theorem}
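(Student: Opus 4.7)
The plan is to extract a codimension-one ideal $K\subseteq L$ on which the Lie bracket reduces to an alternating form together with a linear functional, and then to invoke the standard symplectic normal form. The argument proceeds in four steps.

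First, I set up the skeleton. By Lemma \ref{caseb}, $Z(L)=\spann\{z\}$, and since $\dim L^k=1$ for all $k\geq 2$ with $L$ nonnilpotent we cannot have $L^2=Z(L)$ (else $L^3=[L,Z(L)]=0$); so $L^2\cap Z(L)=\{0\}$ and $[L,L]=L^2\oplus Z(L)=\spann\{x_1,z\}$ for some generator $x_1$ of $L^2$. Because $[L,x_1]=L^2=\spann\{x_1\}$, the rule $[u,x_1]=\lambda(u)x_1$ defines a linear functional $\lambda\colon L\to\cc$, nonzero because $x_1\notin Z(L)$. Choose $x_2\in L$ with $[x_1,x_2]=x_1$ and set $K=\ker\lambda$; since $\lambda$ vanishes on $\spann\{x_1,z\}=[L,L]$, $K$ is a codimension-$1$ ideal containing $x_1$ and $z$.

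Second, I show $[K,K]\subseteq\spann\{z\}$. Given $u,v\in K$, write $[u,v]=\alpha x_1+\beta z$. The Jacobi identity $[x_2,[u,v]]=[[x_2,u],v]+[u,[x_2,v]]$ has right-hand side zero, because $[x_2,u],[x_2,v]\in\spann\{x_1,z\}$ and $\spann\{x_1,z\}$ centralizes $K$ (indeed $[x_1,K]=0$ since $K=\ker\lambda$, and $z\in Z(L)$). Since also $[x_2,[u,v]]=-\alpha x_1$, we get $\alpha=0$, so $[u,v]\in\spann\{z\}$.

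Third, I isolate the symplectic part of $K$. Writing $[x_2,u]=\mu(u)x_1+\nu(u)z$ for $u\in K$, we have $\mu(x_1)=-1$, so $K':=\ker\mu$ is a complement to $\spann\{x_1\}$ inside $K$ and still contains $z$. Define an alternating form $\phi$ on $K'$ by $[u,v]=\phi(u,v)z$; both $\phi$ and $\nu|_{K'}$ kill $z$ and descend to $V:=K'/\spann\{z\}$. Replacing $x_2$ by $x_2+u_0$ with $u_0\in K'$ preserves $[x_1,x_2]=x_1$ and shifts $\bar\nu$ by $\bar\phi(\bar u_0,\cdot)$ while leaving $\bar\phi$ alone. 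Choose a symplectic basis $\{\bar x_1',\bar y_1',\ldots,\bar x_k',\bar y_k',\bar u_1,\ldots,\bar u_m\}$ of $V$ for $\bar\phi$ (standard over $\cc$); the freedom above kills $\bar\nu$ on the symplectic pairs, while the values of $\bar\nu$ on the radical $\spann\{\bar u_1,\ldots,\bar u_m\}$ are invariant under the allowed modifications.

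Finally, I read off the two cases. If $\bar\nu\equiv 0$ on the radical, then each lift $u_j$ satisfies $[x_1,u_j]=0$, $[x_2,u_j]=0$, and $\phi(u_j,\cdot)=0$, so $u_j\in Z(L)=\spann\{z\}$; hence $m=0$, and relabeling the symplectic pairs as $(z_{2i-1},z_{2i})$ gives case (1) with $n=2k$ even. If $\bar\nu\not\equiv 0$ on the radical, I normalize to $\nu(u_1)=1$ and replace each $u_j$ ($j\geq 2$) by $u_j-\nu(u_j)u_1$ to zero out $\bar\nu$ there; the centrality argument now forces $u_2,\ldots,u_m\in\spann\{z\}$, hence $m=1$, and setting $z_1:=u_1$ and relabeling the symplectic pairs as $(z_{2i},z_{2i+1})$ yields case (2) with $n=2k+1$ odd and $[x_2,z_1]=z$. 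The main subtlety is this last dichotomy: the constraint $\dim Z(L)=1$ is precisely what forces the radical of $\bar\phi$ to have dimension $0$ or $1$, producing the even/odd split; once this is in place, everything else is bookkeeping.
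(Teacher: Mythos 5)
Your proof is correct, and it takes a genuinely different route from the paper's. The paper fixes $x\in L^3$ and $y$ with $[x,y]=x$, sets $M=C_L(\{x,y\})$, and then runs a case analysis on $b(y)\in\{1,2\}$ and on whether $[w,M]$, $[M,M]$, $[N,N]$ vanish, repeatedly invoking Proposition \ref{breadthoneclassify} to put the nilpotent breadth-$1$ pieces $M$ and $N$ into normal form. You instead package the whole structure into linear-algebraic data on a single codimension-two subspace: the functional $\lambda(u)x_1=[u,x_1]$ cuts out the ideal $K$, the Jacobi computation forces $[K,K]\subseteq\spann\{z\}$, and then the pair consisting of the alternating form $\phi$ and the functional $\nu=[x_2,\cdot]$ on $V=K'/\spann\{z\}$ is normalized simultaneously (symplectic basis for $\phi$, with the gauge freedom $x_2\mapsto x_2+u_0$ absorbing $\nu$ on the nondegenerate part). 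The even/odd dichotomy then falls out of whether $\nu$ vanishes on the radical of $\phi$, with $\dim Z(L)=1$ bounding that radical by $1$ — this replaces the paper's branching on $b(y)$ and its sub-cases. Your version buys a more unified argument with essentially one case split at the end; the paper's version stays closer to the breadth-$1$ machinery already developed and reuses it as a black box. Two small points worth making explicit if you write this up: the reduction $[L,x_1]=\spann\{x_1\}$ uses $L^3=L^2$ (both are $1$-dimensional and nested) together with $x_1\notin Z(L)$; and the hypothesis $\dim[L,L]=2$ is what rules out the degenerate outcome $k=m=0$ in your case (1), since otherwise $z\notin[L,L]$.
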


\begin{proof}
Let $0\neq x\in L^3$. Then $L^3=\spann\{x\}$. Since $[x,L]\in[L^3,L]=L^4=\spann\{x\}$, $\im\:\ad_x=\spann\{x\}$ and $b(x)=1$. By Lemma \ref{caseb}, $\dim Z(L)=1$. Let $0\neq z\in Z(L)\subseteq[L,L]$. Then $[L,L]=\spann\{z,x\}$, which is abelian, so $[L,L]\subseteq C_L([L,L])=\ker\ad_x$. Because $[x,L^2]=\{0\}$, there exists $0\neq y\in L\setminus[L,L]$ such that $[x,y]=x$. As $b(x)=1$, $\ker\ad_x$ has codimension 1 in $L$, so $L=\ker\ad_x\oplus\spann\{y\}$. Additionally, we let $M=C_L(\{x,y\})\subseteq\ker\ad_x=C_L([L,L])$. Observe that $[M,M]\subseteq[C_L([L,L]),C_L([L,L])]\subseteq Z(L)$, so $[M,M]$ is abelian or $[M,M]=Z(L)$. Since $\dim [L,L]=2$, we have $0<b(y)\leq 2$.
\bigskip

\noindent \underline{\bf Case 1:} $b(y)=1$. Then $\ker\ad_y$ has codimension 1 in $L$. Note that $M$ has codimension 2 in $L$ and $L=M\oplus\spann\{x,y\}$. If $[M,M]=\{0\}$, then $M$ is abelian and $[L,L]=\spann\{[x,y]\}=\spann\{x\}$, a contradiction. Therefore $[M,M]=Z(L)$ which is 1-dimensional. As $M^3=\{0\}$, $M$ is a nilpotent Lie algebra of breadth 1 by Theorem \ref{breadthone}. Since $L$ is pure, by Proposition \ref{breadthoneclassify}, $M$ has a basis $\{u_1,v_1,u_2,v_2,\ldots,u_k,v_k,z'\}$ such that $[u_i,v_j]=\delta_{ij}z'$ and $z'\in Z(M)$ for all $i,j\in\{1,2,\ldots,k\}$ for some $k\in\zz_{\geq1}$. Hence $L=M\oplus\spann\{x,y\}=\spann\{x,y,u_1,v_1,u_2,v_2,\ldots,u_k,v_k,z'\}$ with the nonzero brackets given by $[x,y]=x, [u_i,v_j]=\delta_{ij}z'$ and $z'\in Z(L)$ for all $i,j\in\{1,2,\ldots,k\}$ for some $k\in\zz_{\geq1}$. By setting $x_1=x, x_2=y, z_i=u_{\frac{i+1}{2}}, z_{i+1}=v_{\frac{i+1}{2}}$ and $z=z'$ for all $i=1,3,5,\ldots,2k-1$, we obtain the Lie algebra $L$ given in $(\ref{even})$.
\bigskip
%%%%%%%%%%%%%%%%%%%%%

\noindent \underline{\bf Case 2:} $b(y)=2$. Then there exists $w\in L\setminus[L,L]$ such that $[y,w]=z$. Since $\im\;\ad_x=\spann\{x\}$, we have $[x,w]=\alpha x$ for some $\alpha\in\cc$. By replacing $w$ by $w-\alpha y$, we may assume that $[x,w]=0$. Thus $w\in\ker\ad_x=C_L([L,L])$. As $\ker\ad_y$ has codimension 2 in $L$, $M$ has codimension 3 in $L$. Thus $L=M\oplus\spann\{x,y,w\}$ and $C_L([L,L])=M\oplus\spann\{x,w\}$. Observe that $[w,M]\subseteq[C_L([L,L]),C_L([L,L])]\subseteq Z(L)$, so $[w,L]=[w,M\oplus\spann\{x,y,w\}]=Z(L)$. Therefore $b(w)=1$. Since $\dim Z(L)=1$, we have that $[w,M]=\{0\}$ or $[w,M]=Z(L)$.\\
%%%%%%%%%%%%%%%%%%%%%
\indent Case 2.1: $[w,M]=\{0\}$. If $[M,M]=\{0\}$, then $M$ is abelian, so $[M,L]=[M,M\oplus\spann\{x,y,w\}]=\{0\}$. Thus $M\subseteq Z(L)$, which implies $M=Z(L)$. As a result, $L=\spann\{x,y,w,z\}$ with the nonzero brackets given by $[x,y]=x, [y,w]=z$ and $z\in Z(L)$. Now, we set $x_1=x, x_2=y, z_1=w$. Then $L=\spann\{x_1,x_2,z_1,z\}$ with the nonzero brackets given by $[x_1,x_2]=x_1, [x_2,z_1]=z$ as stated in $(\ref{odd})$. On the other hand, if $[M,M]=Z(L)$, then by Theorem \ref{breadthone}, $M$ is a nilpotent Lie algebra of breadth 1 and has a basis $\{u_1,v_1,u_2,v_2,\ldots,u_k,v_k,z',z_1,z_2,\ldots,z_l\}$ with the nonzero brackets given by $[u_i,v_j]=\delta_{ij}z'$ and $z',z_1,z_2,\ldots,z_l\in Z(M)$ for all $i,j\in\{1,2,\ldots,k\}$ for some $k\in\zz_{\geq1}$ and $l\in\zz_{\geq0}$. Observe that $z',z_1,z_2,\ldots,z_l\in Z(L)$ because $[w,M]=\{0\}$. As $\dim Z(L)=1$, we have $l=0$ and there exists $\alpha\in\cc\setminus\{0\}$ such that $[y,w]=z=\alpha z'$. By taking $\spann\{x,y,w\}$ into account and replace $w$ by $\alpha^{-1}w$, we have $L=\spann\{x,y,w,u_1,v_1,u_2,v_2,\ldots,u_k,v_k,z'\}$ with the nonzero brackets given by $[x,y]=x, [y,w]=z', [u_i,v_j]=\delta_{ij}z'$ and $z'\in Z(L)$ for all $i,j\in\{1,2,\ldots,k\}$ for some $k\in\zz_{\geq1}$. By setting $x_1=x, x_2=y, z_1=w, z_i=u_{\frac{i}{2}}, z_{i+1}=v_{\frac{i}{2}}$ and $z=z'$ for all $i=2,4,6,\ldots,2k$, we obtain the Lie algebra $L$ given in $(\ref{odd})$.\\
%%%%%%%%%%%%%%%%%%%%%
\indent Case 2.2:  $[w,M]=Z(L)$. There exists $v\in M$ such that $[w,v]=z$. Observe that $[v,L]=[v,M\oplus\spann\{x,y,w\}]=Z(L)$, so $b(v)=1$. Let $M'=\ker\ad_w\cap\ker\ad_v=C_L(\{w,v\})$ and $N=M\cap M'=C_L(\{x,y,w,v\})$. As $b(w)=b(v)=1$, we have $L=M'\oplus\spann\{w,v\}$. Then $L=N\oplus\spann\{x,y,w,v\}, M=N\oplus\spann\{v\}$ and $[N,N]\subseteq[M,M]\subseteq Z(L)$. If $[N,N]=\{0\}$, then $N$ is abelian, which implies $N=Z(L)$. By replacing $y$ by $y+v$, we have $[x,y]=x$ and $[y,w]=[y,v]=0$. Hence $L=\spann\{x,y,w,v,z\}$ with the nonzero brackets given by $[x,y]=x$, $[w,v]=z$ and $z\in Z(L)$. By setting $x_1=x, x_2=y, z_1=w$ and $z_2=v$, we obtain the Lie algebra $L$ given in $(\ref{even})$. On the other hand, if  $[N,N]=Z(L)$, then by Theorem \ref{breadthone}, $N$ is a nilpotent Lie algebra of breadth 1 and has a basis $\{u_1,v_1,u_2,v_2,\ldots,u_k,v_k,z',z_1,z_2,\ldots,z_l\}$ with the nonzero brackets given by $[u_i,v_j]=\delta_{ij}z'$ and $z',z_1,z_2,\ldots,z_l\in Z(N)$ for all $i,j\in\{1,2,\ldots,k\}$ for some $k\in\zz_{\geq1}$ and $l\in\zz_{\geq0}$. Note that $z',z_1,z_2,\ldots,z_l\in Z(L)$ as $N=C_L(\{x,y,w,v\})$. Since $\dim Z(L)=1$, we have $l=0$ and $Z(L)=\spann\{z\}=\spann\{z'\}$. Without loss of generality, we may additionally assume that $[u_i,v_j]=\delta_{ij}z$ for all $i,j\in\{1,2,\ldots,k\}$ for some $k\in\zz_{\geq1}$. Again, we replace $y$ by $y+v$ so that $[x,y]=x$ and $[y,w]=[y,v]=0$. Hence $L=\spann\{x,y,w,v,u_1,v_1,u_2,v_2,\ldots,u_k,v_k,z\}$ with the nonzero brackets given by $[x,y]=x$, $[w,v]=z$, $[u_i,v_j]=\delta_{ij}z$ for all $i,j\in\{1,2,\ldots,k\}$ for some $k\in\zz_{\geq1}$ and $z\in Z(L)$. Define $x_1=x, x_2=y, z_1=w, z_2= v, z_i=u_{\frac{i-1}{2}}, z_{i+1}=v_{\frac{i-1}{2}}$ and $z=z'$ for all $i=3,5,7,\ldots,2k+1$. Hence we acquire the Lie algebra $L$ given in $(\ref{even})$. This completes the proof.
\end{proof}

%%%%%%%%%%%%%%%%%%%%%%%%%%%%%%%%%%%%%%%%%%%%%%

\subsubsection{$\dim L^k=2$ for all $k\in\zz_{\geq2}$} 
%%%%%%%%%%%%%%%%%%%%%%%%%%%%%%%%%%%%%%%%%%%%%%

\begin{proposition}\label{centerzero}%[Case C extended 1]
Let $L$ be a pure nonnilpotent solvable Lie algebra of breadth 2 such that $\dim[L,L]=2$ and $\dim L^k=2$ for all $k\in\zz_{\geq2}$. Then $Z(L)=\{0\}$ and $C_L([L,L])$ is an abelian ideal of $L$.
\end{proposition}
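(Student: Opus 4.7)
The plan is to establish both claims by first proving $Z(L)=\{0\}$, since the abelianness of $C_L([L,L])$ will then follow immediately from the general identity $[C_L([L,L]),C_L([L,L])]\subseteq Z(L)$ recorded at the start of this section, together with the fact that $C_L([L,L])$ is always an ideal as it is the centralizer of an ideal.

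To prove $Z(L)=\{0\}$ I will argue by contradiction. Since $L$ is pure, $Z(L)\subseteq[L,L]$, and because $L$ is solvable over $\cc$ the derived subalgebra $[L,L]$ is nilpotent; being $2$-dimensional it is in fact abelian. If $\dim Z(L)=2$ then $Z(L)=[L,L]$, which would force $L^3=[L,[L,L]]=\{0\}$, contradicting the hypothesis $\dim L^k=2$ for all $k\geq 2$. So the only remaining case to exclude is $\dim Z(L)=1$. In that situation I would write $Z(L)=\spann\{z\}$ and extend to a basis $\{z,y\}$ of $A:=[L,L]$; since $[L,z]=\{0\}$, one has $L^3=[L,A]=[L,y]$, and the hypothesis $\dim L^3=2$ forces $[L,y]=A$. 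In particular there exist $l_1,l_2\in L$ with $[l_1,y]=y$ and $[l_2,y]=z$.

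The contradiction then comes from applying the Jacobi identity to $l_1,l_2,y$. The three cyclic terms evaluate as follows: $[l_1,[l_2,y]]=[l_1,z]=0$ because $z\in Z(L)$; $[l_2,[y,l_1]]=-[l_2,y]=-z$ by the choice of $l_1$; and $[y,[l_1,l_2]]=0$ since $[l_1,l_2]\in A$ and $A$ is abelian. Summing gives $-z=0$, which contradicts $z\neq 0$. Hence $Z(L)=\{0\}$, and the proposition follows.

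I expect the only subtle step to be spotting the correct reduction: namely, that $\dim L^3=2$ together with $Z(L)\subseteq A$ forces $\ad_y\colon L\to A$ to be surjective, which is precisely what supplies the pair $(l_1,l_2)$ needed for the Jacobi computation. Once this is in place, the rest of the argument is just dimension counting, the standard observation that $2$-dimensional nilpotent Lie algebras are abelian, and the general inclusion $[C_L([L,L]),C_L([L,L])]\subseteq Z(L)$.
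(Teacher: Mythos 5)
Your proof is correct and follows essentially the same route as the paper's: after ruling out $\dim Z(L)=2$ via nonnilpotency, both arguments take a one-dimensional center $\spann\{z\}$, use $\dim L^3=2$ to show that bracketing against a complementary vector of $z$ in $[L,L]$ surjects onto $[L,L]$, and then derive $z=0$ from the Jacobi identity applied to the two resulting preimages. The only cosmetic difference is that the paper phrases the surjectivity as $\im\,\ad_x=[L,L]$ and writes the Jacobi step as $0=[x,[y_1,y_2]]=[[x,y_1],y_2]+[y_1,[x,y_2]]$, which is your computation up to relabeling.
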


\begin{proof}
Since $L$ is pure and nonnilpotent, $\dim Z(L)=0$ or $1$. Suppose that $Z(L)=\spann\{z\}$ for some $0\neq z\in L$. Extend $Z(L)$ to $[L,L]=\spann\{x,z\}$. As $[x,L]\subseteq[[L,L],L]=[L,L]$ which is 2-dimensional, we have $b(x)=2$ and $\im\:\ad_x=[L,L]$. There exists $y_1,y_2\in L$ such that $[x,y_1]=x$ and $[x,y_2]=z$. Because $[y_1,y_2]\in[L,L]$, we get $[y_1,y_2]=\alpha x+\beta z$ for some $\alpha,\beta\in\cc$. Therefore
\begin{align*}
0=[x,\alpha x+\beta z]=[x,[y_1,y_2]]=[[x,y_1],y_2]+[y_1,[x,y_2]]=[x,y_2]+[y_1,z]=z,
\end{align*}
which is a contradiction. As a result, $Z(L)=\{0\}$, which implies $C_L([L,L])$ is abelian.
\end{proof}

%%%%%%%%%%%%%%%%%%%%%%%%%%%%%%%%%%%%%%%%%%%%%%

\begin{proposition}\label{central}%[Case C extended 2]
Let $L$ be a pure nonnilpotent solvable Lie algebra of breadth 2 such that $\dim[L,L]=2$ and $\dim L^k=2$ for all $k\in\zz_{\geq2}$. The following are equivalent:
\begin{enumerate}
\item $C_L([L,L])\neq\{0\}$;
\item $[L,L]$ is an abelian ideal of $L$;
\item $[L,L]\subseteq C_L([L,L])$.
\end{enumerate}
\end{proposition}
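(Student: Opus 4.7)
The plan is to first dispose of the straightforward equivalences and then concentrate on the one implication that requires real work. Since $[L,L]$ is automatically an ideal of $L$, condition (2) simply asserts that $[L,L]$ is abelian, and this is plainly equivalent to $[[L,L],[L,L]]=\{0\}$, i.e., $[L,L]\subseteq C_L([L,L])$. This yields (2) $\iff$ (3). For (3) $\Rightarrow$ (1) I would note that $\dim[L,L]=2$ forces $[L,L]\neq\{0\}$, so $C_L([L,L])\supseteq[L,L]\neq\{0\}$.

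The main content is (1) $\Rightarrow$ (3). I would in fact strengthen this: under the standing hypotheses $\dim[L,L]=2$ and $\dim L^k=2$ for all $k\in\zz_{\geq2}$, the derived subalgebra $[L,L]$ is forced to be abelian, irrespective of whether condition (1) is assumed. Suppose for contradiction that $[L,L]$ is non-abelian. Being $2$-dimensional and non-abelian, $[L,L]$ admits a basis $\{x,y\}$ with $[x,y]=x$. For an arbitrary $u\in L$, write $[u,x]=\alpha x+\beta y$ and $[u,y]=\gamma x+\delta y$ in this basis. I would then apply the Jacobi identity to $(u,x,y)$:
$$[u,[x,y]]=[[u,x],y]+[x,[u,y]].$$
The left-hand side equals $[u,x]$ since $[x,y]=x$, while the right-hand side simplifies to $\alpha x+\delta x$. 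Comparing coefficients in $\alpha x+\beta y=(\alpha+\delta)x$ forces $\beta=0$ and $\delta=0$, so $[u,x],[u,y]\in\spann\{x\}$ for every $u\in L$. Consequently $L^2=[L,[L,L]]\subseteq\spann\{x\}$ has dimension at most $1$, contradicting the hypothesis $\dim L^2=2$.

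The only real computation is the Jacobi identity step, and I do not anticipate any substantive obstacle beyond keeping track of the coefficients. A mildly surprising byproduct of this approach is that conditions (1)--(3) are in fact automatically satisfied under the stated hypotheses, so the proposition functions as a convenient tool for subsequent arguments rather than as a genuine dichotomy among cases.
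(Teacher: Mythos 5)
Your proof is correct, and it takes a genuinely different route from the paper's. The paper establishes (1) $\Rightarrow$ (2) by contradiction via a structural decomposition: assuming $C_L([L,L])\neq\{0\}$ and $[L,L]$ nonabelian, it first shows $C_L([L,L])\cap[L,L]=\{0\}$, then splits $L$ as a Lie algebra direct sum $V\oplus C_L([L,L])$ with $[L,L]\subseteq V$, and finally invokes Proposition \ref{centerzero} (namely $Z(L)=\{0\}$ and $C_L([L,L])$ abelian) to force $C_L([L,L])\subseteq Z(L)=\{0\}$, contradicting (1). You instead apply the Jacobi identity once in a basis $\{x,y\}$ of $[L,L]$ with $[x,y]=x$ to get $[u,x],[u,y]\in\spann\{x\}$ for every $u\in L$, whence $L^2\subseteq\spann\{x\}$ would have dimension at most $1$, contradicting $\dim L^2=2$; the coefficient computation checks out. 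Your argument is more elementary — it uses only $\dim[L,L]=\dim L^2=2$, not purity, nonnilpotency, solvability, or the breadth hypothesis — and it proves the strictly stronger, unconditional statement that $[L,L]$ is abelian. That stronger statement is exactly the paper's Proposition \ref{L22naLk2}, which the paper derives afterwards by a much longer case analysis on $b(x)$ and $b(y)$ that itself cites Proposition \ref{central}; your observation would collapse Propositions \ref{central} and \ref{L22naLk2} into a single short lemma. The only thing the paper's route buys is the presentation of the three conditions as a genuine dichotomy to be resolved, whereas under your argument all three hold automatically.
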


\begin{proof}
Assume that $C_L([L,L])\neq\{0\}$ and $[L,L]=\spann\{x,y\}$ is nonabelian. Then $[x,y]\neq0$. Next, we will claim that $C_L([L,L])\cap[L,L]=\{0\}$. Suppose the contrary. There exists $0\neq \alpha x+\beta y\in C_L([L,L])$ for some $\alpha,\beta\in\cc$. Therefore $0=[x,\alpha x+\beta y]=\beta[x,y]$ and $0=[y,\alpha x+\beta y]=-\alpha [x,y]$, so $\alpha=\beta=0$, a contradiction. Hence we obtain our claim. Let $V$ be a complementary subspace of $C_L([L,L])$ in $L$ containing $[L,L]$. Then $L=V\oplus C_L([L,L])$ as a vector space and $[V,L]\subseteq[L,L]\subseteq V$. Thus $V$ is an ideal of $L$, so $L=V\oplus C_L([L,L])$ is a Lie algebra direct sum. By Proposition \ref{centerzero}, $C_L([L,L])$ is abelian, so $\{0\}\neq C_L([L,L])\subseteq Z(L)$, contradicts Proposition \ref{centerzero}. The other two implications are straightforward.
\end{proof}

%%%%%%%%%%%%%%%%%%%%%%%%%%%%%%%%%%%%%%%%%%%%%%
\begin{proposition}\label{L22naLk2}%[Case C extended 1]
Let $L$ be a pure nonnilpotent solvable Lie algebra of breadth 2 such that $\dim[L,L]=2$ and $\dim L^k=2$ for all $k\in\zz_{\geq2}$. Then $[L,L]$ is abelian.
\end{proposition}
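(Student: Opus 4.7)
The plan is to argue by contradiction, supposing $[L,L]$ is \emph{not} abelian and deriving a contradiction with the hypothesis $\dim L^3 = 2$.

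First, since $\dim[L,L]=2$ and $[L,L]$ is a two-dimensional nonabelian Lie algebra, I can choose a basis $\{x,y\}$ of $[L,L]$ with $[x,y]=x$ (this is the only two-dimensional nonabelian Lie algebra, up to isomorphism, over $\cc$). For any $w\in L$, the restriction $D:=\ad_w|_{[L,L]}$ is a derivation of $[L,L]$ (because $[L,L]$ is an ideal of $L$ and $\ad_w$ is a derivation of $L$).

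Next, I would carry out the elementary derivation computation. Writing $D(x)=ax+by$ and $D(y)=cx+dy$, the derivation identity $D[x,y]=[D(x),y]+[x,D(y)]$ becomes
\[
ax+by \;=\; [ax+by,y]+[x,cx+dy] \;=\; ax+dx \;=\; (a+d)x,
\]
which forces $b=0$ and $d=0$. Hence every derivation $D$ of $[L,L]$ satisfies $D(x),D(y)\in\spann\{x\}$.

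Applying this to $D=\ad_w|_{[L,L]}$ for arbitrary $w\in L$ yields $[w,x],[w,y]\in\spann\{x\}$, and therefore
\[
L^3 \;=\; [L,[L,L]] \;=\; [L,x]+[L,y] \;\subseteq\; \spann\{x\}.
\]
This gives $\dim L^3\leq 1$, contradicting the hypothesis $\dim L^k=2$ for all $k\geq 2$. Hence $[L,L]$ must be abelian.

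I do not expect any real obstacle here: the crux is simply observing that every derivation of the two-dimensional nonabelian Lie algebra has image contained in its derived subalgebra, so $L^3$ cannot be two-dimensional once $[L,L]$ is nonabelian. (Equivalently, one could deduce this by combining Proposition \ref{central} with an argument on $C_L([L,L])$, but the derivation argument is more direct.)
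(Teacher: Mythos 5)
Your proof is correct, and it takes a genuinely different and more economical route than the paper's. The paper argues by cases on the breadths $b(x)$ and $b(y)$ of the basis vectors of the nonabelian $[L,L]=\spann\{x,y\}$ with $[x,y]=x$: it first invokes Proposition \ref{central} to get $C_L([L,L])=\ker\ad_x\cap\ker\ad_y=\{0\}$, and then rules out each of the cases $(b(x),b(y))\in\{(2,1),(1,2),(2,2)\}$ by producing auxiliary elements $u,v$ and deriving contradictions from the Jacobi identity. Your observation that every derivation of the two-dimensional nonabelian Lie algebra has image inside $\spann\{x\}=[[L,L],[L,L]]$ subsumes all of these cases at once: it shows $b_{[L,L]}(w)\leq 1$ for every $w\in L$ and hence $[L,[L,L]]\subseteq\spann\{x\}$, which is incompatible with the hypothesis that the lower central terms stay two-dimensional. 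Your argument also never needs purity, nonnilpotency, or Proposition \ref{central}, so it is both shorter and slightly more general. One cosmetic point: in the paper's indexing ($L^1=[L,L]$, $L^n=[L,L^{n-1}]$) the ideal $[L,[L,L]]$ is $L^2$ rather than $L^3$, but since the hypothesis asserts $\dim L^k=2$ for \emph{all} $k\geq 2$, this off-by-one in labelling does not affect the contradiction.
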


\begin{proof}
Suppose that $[L,L]$ is nonabelian. Then $[L,L]=\spann\{x,y\}$ such that $[x,y]=x$. If $b(x)=b(y)=1$, then $[L,[L,L]]=\spann\{x\}$, a contradiction. Thus either $b(x)=2$ or $b(y)=2$.\\
\indent Suppose that $b(x)=2$ and $b(y)=1$. There exists $u\in L\setminus[L,L]$ such that $[x,u]=\alpha x+\beta y$ for some $\alpha,\beta\in\cc$ and $\beta\neq0$. As $b(y)=1$, we have $[y,u]=\gamma x$ for some $\gamma\in\cc$. Thus $0=[x,\gamma x ]=[x,[y,u]]=[[x,y],u]+[y,[x,u]]=[x,u]+[y,\alpha x+\beta y]=\beta y$,
a contradiction.\\
\indent Assume that $b(x)=1$ and $b(y)=2$. There exists $u\in L\setminus[L,L]$ such that $[y,u]=\alpha x+\beta y$ for some $\alpha,\beta\in\cc$ and $\beta\neq0$. As $b(x)=1$, we get $[x,u]=\gamma x$ for some $\gamma\in\cc$. Therefore $\beta x=[x,\alpha x+\beta y]=[x,[y,u]]=[[x,y],u]+[y,[x,u]]=[x,u]+[y,\gamma x]=0$, a contradiction.\\
\indent Finally, we suppose that $b(x)=b(y)=2$. Since $b(x)=2$, there exists $u\in L\setminus[L,L]$ such that $[x,u]=\alpha x+\beta y$ where $\alpha,\beta\in\cc$ and $\beta\neq0$. Replacing $u$ by $\beta^{-1}(u-\alpha y)$, we may assume that $[x,u]=y$. Note that $\ker\ad_x\cap\ker\ad_y=C_L([L,L])=\{0\}$ by Proposition \ref{central}. If $u\notin\ker\ad_y$, then $L=\spann\{x,y,u\}$ by counting the codimensions of $\ker\ad_x$ and $\ker\ad_y$. Moreover, we set $[y,u]=\gamma x+\delta y$ where $\gamma,\delta\in\cc$. Therefore $\delta x=[x,\gamma x+\delta y]=[x,[y,u]]=[[x,y],u]+[y,[x,u]]=[x,u]=y$, so $\delta=0$. This is impossible since $b(y)=2$. Hence $u\in\ker\ad_y$. On the other hand, since $b(y)=2$, there exists $v\in L\setminus\spann\{x,y,u\}$ such that $[y,v]=\alpha x+\beta y$ where $\alpha,\beta\in\cc$ and $\beta\neq0$. Replacing $v$ by $\beta^{-1}(v+\alpha x)$, we may assume that $[y,v]=y$. Now, suppose $v\notin\ker\ad_x$ and set $[x,v]=\alpha x+\beta y$ where $\alpha,\beta\in\cc$, not all zero. Then we have
$$
-\alpha x=[y,\alpha x+\beta y]=[y,[x,v]]=[[y,x],v]+[x,[y,v]]=[-x,v]+[x,y]=-\alpha x-\beta y+x,
$$
so $x=\beta y\in\spann\{y\}$, a contradiction. Thus $v\in\ker\ad_x$. In summary, $x,v\in\ker\ad_x$ and $y,u\in\ker\ad_y$, which yields $L=\ker\ad_x\oplus\ker\ad_y=\spann\{x,y,u,v\}$ such that $[x,y]=x$, $[x,u]=y$ and $[y,v]=y$. To inspect $[u,v]$, we let $[u,v]=\gamma x+\delta y$ where $\gamma,\delta\in\cc$. Therefore $y=[y,v]=[[x,u],v]+[u,[x,v]]=[x,[u,v]]=\delta x\in\spann\{x\}$, which yields a contradiction. Hence $[L,L]$ is abelian. This completes the proof.
\end{proof}

%%%%%%%%%%%%%%%%%%%%%%%%%%%%%%%%%%%%%%%%%%%%%%

\begin{proposition}\label{meta}
Let $L$ be a pure nonnilpotent solvable Lie algebra of breadth 2 such that $\dim[L,L]=2$ and $\dim L^k=2$ for all $k\in\zz_{\geq2}$. Then there exists an abelian subalgebra $A$ such that $L$ has a semidirect decomposition $L=A\ltimes[L,L]$.
\end{proposition}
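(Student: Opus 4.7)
The plan is to exploit the fact that, by Proposition \ref{L22naLk2}, $V := [L,L]$ is a two-dimensional abelian ideal of $L$, and then build the required abelian complement $A$ by correcting a naive vector-space complement of $V$ by elements of $V$.

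First I set up the adjoint action. Because $V$ is abelian, the representation $\rho \colon L \to \gl(V)$ kills $V$ and descends to $\bar\rho \colon L/V \to \gl(V) \cong \gl_2(\cc)$. Since $L/V$ is abelian, the image $H := \bar\rho(L/V)$ is an abelian subalgebra of $\gl_2(\cc)$, hence $\dim H \leq 2$. The hypothesis $\dim L^k = 2$ for $k \geq 2$ rewrites as $[L,V] = V$, i.e.\ $\sum_{T \in H} \im T = V$. By Proposition \ref{centerzero}, $C := C_L(V) = \ker \rho$ is an abelian ideal of $L$ containing $V$. I choose vector-space complements $C = V \oplus W$ and $L = C \oplus U$; since $\bar\rho|_U$ is injective, $\dim U = \dim H \in \{1, 2\}$ (the case $\dim U = 0$ would force $L = C$ abelian).

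For the modifications, suppose first $\dim U = 2$ and pick $u_1, u_2 \in U$ whose images $T_i := \bar\rho(u_i)$ form a basis of $H$; set $c := [u_1, u_2] \in V$. Replacing each $u_i$ by $u'_i := u_i + v_i$ with $v_i \in V$ makes $[u'_1, u'_2] = 0$ precisely when $T_2 v_1 - T_1 v_2 = c$, which is solvable because $\im T_1 + \im T_2 = V$. (The case $\dim U = 1$ skips this step.) Then, for each $w \in W$, I seek $v_w \in V$ so that $w' := w + v_w$ satisfies $[u'_i, w'] = 0$ for every $i$; this is the simultaneous system $T_i v_w = -[u'_i, w]$ in $V$, whose compatibility $T_1[u'_2, w] = T_2 [u'_1, w]$ follows from the Jacobi identity together with $[u'_1, u'_2] = 0$. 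Setting $A := \spann\{u'_i\} \oplus \{w + v_w : w \in W\}$, the $W$-part lies in $C$ so the corresponding brackets vanish, while the $u'$-part is abelian by construction; hence $A$ is an abelian subalgebra projecting isomorphically to $L/V$, yielding $L = A \ltimes V$.

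The main obstacle is the simultaneous solvability of the linear system $T_i v_w = -[u'_i, w]$ when $\dim H = 2$: Jacobi gives the compatibility $T_1[u'_2,w] = T_2[u'_1,w]$, but one must verify that the structure theorem for abelian subalgebras of $\gl_2(\cc)$ (a diagonal torus, or a scalar-plus-nilpotent subalgebra $\cc I + \cc N$) together with $H \cdot V = V$ actually produces a solution. This reduces to a short case check, using in particular that some element of $H$ acts invertibly, or else that the two generators act along complementary eigenlines.
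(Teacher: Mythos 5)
Your proposal is correct, but it takes a genuinely different route from the paper. The paper's proof is structural: it cites Bourbaki for the existence of a Cartan subalgebra $H$ with $L=H+[L,L]$, rules out $H\cap[L,L]\neq\{0\}$ by producing from $Z(H)$ a nonzero central element of $L$ (contradicting Proposition \ref{centerzero}), and then gets $[H,H]\subseteq H\cap[L,L]=\{0\}$ for free. You instead construct the complement by hand, correcting a vector-space complement of $V=[L,L]$ by elements of $V$ until all brackets vanish; this replaces the Cartan-subalgebra machinery with explicit linear algebra in $\gl_2(\cc)$, at the price of being tied to $\dim V=2$, whereas the paper's argument would work for $[L,L]$ abelian of any dimension once $Z(L)=\{0\}$ is known. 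The step you defer --- simultaneous solvability of $T_iv_w=-[u_i',w]$ when $\dim H=2$ --- does close, and more cheaply than your case check suggests: a $2$-dimensional abelian subalgebra of $\gl_2(\cc)$ equals $\cc[T]=\cc I+\cc T$ for any non-scalar $T$ it contains, so $H$ always contains an invertible element; choosing the basis of $H$ (equivalently of $U$) so that $T_1$ is invertible, the vector $v_w:=-T_1^{-1}[u_1',w]$ automatically satisfies the second equation, since $T_2T_1^{-1}=T_1^{-1}T_2$ and $T_2[u_1',w]=T_1[u_2',w]$ by the Jacobi identity. Two small repairs: choose $v_w$ to depend linearly on $w$ (e.g.\ solve only for a basis of $W$) so that $A$ is actually a subspace, and note that pairwise commutativity of the corrected elements of $W$ uses that $C_L(V)$ is abelian, which Proposition \ref{centerzero} supplies.
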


\begin{proof}
By Proposition \ref{centerzero} and Proposition \ref{L22naLk2}, $Z(L)=\{0\}$ and $[L,L]$ is abelian. By \cite[Corollary 3, p. 14]{B75}, $L=H+[L,L]$ as a vector space where $H$ is a Cartan subalgebra of $L$. Suppose that $I:=H\cap[L,L]\neq\{0\}$. Thus $I$ is an ideal of $H$. Since $H$ is nilpotent, $I\cap Z(H)\neq\{0\}$. Let $0\neq x\in I\cap Z(H)$ and $y\in L=H+[L,L]$. Then $y=h+z$ for some $h\in H$ and $z\in[L,L]$. As $x,z\in[L,L]$, we get $[x,z]=0$. Therefore $[x,y]=[x,h+z]=0$, so $x\in Z(L)$, a contradiction. As a result, $I=H\cap[L,L]=\{0\}$ which implies $L=H\ltimes[L,L]$. Observe that $[L,L]=[H,H]+[L,L]$,
so $[H,H]\subseteq H\cap[L,L]=\{0\}$. Hence $H$ is abelian.
\end{proof}

%%%%%%%%%%%%%%%%%%%%%%%%%%%%%%%%%%%%%%%%%%%%%%

Finally, we establish the remaining part of the classification.

%%%%%%%%%%%%%%%%%%%%%%%%%%%%%%%%%%%%%%%%%%%%%%

\begin{theorem}\label{L22aLk2}%[Case C extended 2]
Let $L$ be a finite-dimensional pure nonnilpotent solvable Lie algebra of breadth 2 such that $\dim[L,L]=2$ and $\dim L^k=2$ for all $k\in\zz_{\geq2}$.Then
\begin{enumerate}
\item\label{one} $L=\spann\{x_1,x_2,x_3,x_4,x_5\}$ with nonzero brackets given by $[x_1,x_5]=x_4, ~[x_2,x_4]=x_4$ and $[x_3,x_5]=x_5$, or
\item\label{two} $L=\spann\{x_1,x_2\}\oplus\spann\{x_3,x_4\}$ with nonzero brackets given by $[x_1,x_2]=x_2$ and $[x_3,x_4]=x_4$, or
\item\label{three} $L=\spann\{x_1,x_2,x_3,x_4\}$ with nonzero brackets given by $[x_1,x_4]=x_3, [x_2,x_3]=x_3$ and $[x_2,x_4]=x_4$, or
\item\label{four} $L=\spann\{x_1,x_2,x_3\}$ such that $[x_1,x_2]=x_2$ and $[x_1,x_3]=x_2+x_3$, or
\item\label{five} $L_\gamma=\spann\{x_1,x_2,x_3\}$ such that $[x_1,x_2]=x_2$ and $[x_1,x_3]=\gamma x_3$ where $\gamma\in\cc\setminus\{0\}$.
\end{enumerate}
\end{theorem}

\begin{proof}
By Proposition \ref{meta}, there exists an abelian subalgebra $A$ of $L$ such that $L=A\ltimes[L,L]$. Let $\bbb_A=\{a_1,a_2,\ldots,a_n\}$ be a basis of $A$. As both $A$ and $[L,L]$ are abelian, the structure of $L$ is completely determined by $T_a:=\ad_a\lvert_{[L,L]}$ for all $a\in\bbb_A$. Let $T=\{T_a\mid a\in\bbb_A\}$ and $\ttt=\spann\:T=\{T_a\mid a\in A\}\subseteq \mathfrak{gl}([L,L])$. As $L$ is solvable, so are $A$ and $\ad(A)$. Then $\ttt$ is also solvable. By Lie's Theorem, there exists $0\neq x\in[L,L]$ such that $[a,x]=T_a(x)=\alpha_ax$ where $\alpha_a\in\cc$ for all $a\in A$. Let $\bbb=\{x,y\}$ be a basis of $[L,L]$. Then for any $a\in \bbb_A$, we have $$(T_a)_\bbb
=
\begin{pmatrix}
\alpha_a & \beta_a\\
0 & \gamma_a
\end{pmatrix}$$ 
where $\alpha_a,\beta_a,\gamma_a\in\cc$. If $\beta_a\neq0$, we replace $T_a$ by $T_{\beta_a^{-1}a}$ so that $(T_a)_\bbb=\begin{pmatrix}\alpha'_a&1\\0&\gamma'_a\end{pmatrix}$. Hence we may assume that $(T_a)_\bbb=\begin{pmatrix}\alpha_a&\beta_a\\0&\gamma_a\end{pmatrix}$ where $\beta_a\in\{0,1\}$ and $\alpha_a,\gamma_a\in\cc$ for all $a\in \bbb_A$. Now, we let 
$$
\bbb_{A_0}=\left\{a\in\bbb_A\:\middle|\: (T_a)_\bbb=\begin{pmatrix}\alpha_a&0\\0&\gamma_a\end{pmatrix}\right\}
~~~~\text{ and }~~~~
\bbb_{A_1}=\left\{a\in\bbb_A\:\middle|\: (T_a)_\bbb=\begin{pmatrix}\alpha_a&1\\0&\gamma_a\end{pmatrix}\right\}.
$$
Then $\bbb_A=\bbb_{A_0}\cup\bbb_{A_1}$. Suppose that $|\bbb_{A_1}|\geq2$, says $\bbb_{A_1}=\{a_1,a_2,\ldots,a_m\}$ for some $m\in\zz_{\geq2}$. For every $i\in\{2,3,\ldots,m\}$, we replace $T_{a_i}$ by $T_{a_i-a_1}$ so that $a_i-a_1\in\bbb_{A_0}$. Hence $|\bbb_{A_1}|\in\{0,1\}$.\\
\indent Next, we consider $a\in\bbb_{A_0}$. If $\alpha_a\neq0$, we may replace $T_a$ by $T_{\alpha_a^{-1}a}$ so that $(T_a)_\bbb=\begin{pmatrix}1&0\\0&\gamma'_a\end{pmatrix}$. Therefore, $(T_a)_\bbb=\begin{pmatrix}\alpha_a&0\\0&\gamma_a\end{pmatrix}$ where $\alpha_a\in\{0,1\}$ and $\gamma_a\in\cc$ for all $a\in\bbb_{A_0}$. Let
$$
\bbb_{A_0}^0=\left\{a\in\bbb_{A_0}\:\middle|\: (T_a)_\bbb=\begin{pmatrix}0&0\\0&\gamma_a\end{pmatrix}\right\}
~~~~\text{ and }~~~~
\bbb_{A_0}^1=\left\{a\in\bbb_{A_0}\:\middle|\: (T_a)_\bbb=\begin{pmatrix}1&0\\0&\gamma_a\end{pmatrix}\right\}.
$$
Then $\bbb_{A_0}=\bbb_{A_0}^0\cup\bbb_{A_0}^1$. Similar to the case $\bbb_{A_1}$, we may assume that $|\bbb_{A_0}^1|\in\{0,1\}$. For any $a\in\bbb_{A_0}^0$, if $\gamma_a=0$, then $a\in Z(L)$, which contradicts Proposition \ref{centerzero}. Thus $\gamma_a\neq0$, so we may assume that $(T_a)_\bbb=\begin{pmatrix}0&0\\0&1\end{pmatrix}$ for any $a\in\bbb_{A_0}^0$, which implies $\bbb_{A_0}^0=\left\{a\in\bbb_{A_0}\:\middle|\: (T_a)_\bbb=\begin{pmatrix}0&0\\0&1\end{pmatrix}\right\}$ and $|\bbb_{A_0}^0|\in\{0,1\}$. In summary, $1\leq\dim A\leq3$, so we have three distinct cases to consider.
\bigskip

%%%%%%%%%%%%%%%%%%%%
\noindent \underline{\bf Case 1:} $\dim A=3$. Then $L=\spann\{a_1,a_2,a_3,x,y\}$ such that $[a_1,x]=\alpha x, ~[a_1,y]=x+\beta y, ~[a_2,x]=x, ~[a_2,y]=\gamma y \text{~~and~~} [a_3,y]=y$ where $\alpha,\beta,\gamma\in\cc$.
Define $x_1=a_1-\alpha a_2+(\alpha\gamma-\beta)a_3, x_2=a_2-\gamma a_3, x_3=a_3, x_4=x$ and $x_5=y$. Then $L=\spann\{x_1,x_2,x_3,x_4,x_5\}$ with nonzero brackets given by $[x_1,x_5]=x_4, ~[x_2,x_4]=x_4$ and $[x_3,x_5]=x_5$, which is the Lie algebra given in (\ref{one}).
\bigskip
%%%%%%%%%%%%%%%%%%%%

\noindent \underline{\bf Case 2:} $\dim A=2$. Then we have three subcases.
%%%%%%%%%%%%%%%%%%%%

Case 2.1: $\bbb_{A_1}=\emptyset$. Then we have $L=\spann\{a_2,a_3,x,y\}$ such that $[a_2,x]=x, [a_2,y]=\gamma y$ and $[a_3,y]=y$ where $\gamma\in\cc$. Set $x_1=a_2-\gamma a_3, x_2=x, x_3=a_3$ and $x_4=y$. Then $L=\spann\{x_1,x_2\}\oplus\spann\{x_3,x_4\}$ with nonzero brackets given by $[x_1,x_2]=x_2$ and $[x_3,x_4]=x_4$, which is the Lie algebra given in (\ref{two}).\\
%%%%%%%%%%%%%%%%%%%%
\indent Case 2.2: $\bbb_{A_0}^1=\emptyset$. Then we have $L=\spann\{a_1,a_3,x,y\}$ such that $[a_1,x]=\alpha x, [a_1,y]=x+\beta y$ and $[a_3,y]=y$ where $\alpha,\beta\in\cc$. This case is impossible since $x+\beta y=[a_1,[a_3,y]]=[a_3,[a_1,y]]=\beta y$.\\
%%%%%%%%%%%%%%%%%%%%
\indent Case 2.3: $\bbb_{A_0}^0=\emptyset$. Then $L=\spann\{a_1,a_2,x,y\}$ such that $[a_1,x]=\alpha x, [a_1,y]=x+\beta y, [a_2,x]=x$ and $[a_2,y]=\gamma y$ where $\alpha,\beta,\gamma\in\cc$. Observe that
$$
x+\beta\gamma y=[a_2,x+\beta y]=[a_2,[a_1,y]]=[a_1,[a_2,y]]=[a_1,\gamma y]=\gamma(x+\beta y)=\gamma x+\beta\gamma y,
$$
so $\gamma=1$. Replacing $a_1$ by $a_1-\alpha a_2$, we have $[a_1-\alpha a_2,x]=0 \text{~~and~~} [a_1-\alpha a_2,y]=x+(\beta-\alpha)y=:x+\delta y$ where $\delta=\beta-\alpha\in\cc$. Hence $[a_1,y]=x+\delta y, [a_2,x]=x$ and $[a_2,y]=y$ where $\delta\in\cc$.\\
%%%%%%%%%%%%%%%%%%%%
\indent If $\delta\neq0$, then we notice that
$$
[\delta^{-1}a_1,x]=0, [\delta^{-1}a_1,x+\delta y]=x+\delta y, [a_2-\delta^{-1}a_1,x]=x \text{ and }
[a_2-\delta^{-1}a_1,x+\delta y]=0.
$$
Now, we set $x_1=\delta^{-1}a_1, x_2=x+\delta y, x_3=a_2-\delta^{-1}a_1$ and $x_4=x$. Thus $L=\spann\{x_1,x_2\}\oplus\spann\{x_3,x_4\}$ with nonzero brackets given by $[x_1,x_2]=x_2$ and $[x_3,x_4]=x_4$, which is the Lie algebra given in (\ref{two}).\\
%%%%%%%%%%%%%%%%%%%%
\indent If $\delta=0$, then $L=\spann\{a_1,a_2,x,y\}$. Define $x_1=a_1, x_2=a_2, x_3=x$ and $x_4=y$. Thus $L=\spann\{x_1,x_2,x_3,x_4\}$ with nonzero brackets given by $[x_1,x_4]=x_3, [x_2,x_3]=x_3$ and $[x_2,x_4]=x_4$, which is the Lie algebra given in (\ref{three}). With respect to ordered basis $\{x_1,x_2,x_3,x_4\}$, we get $\ad_{x_1}=E_{34}, \ad_{x_2}=E_{33}+E_{44}, \ad_{x_3}=-E_{32}, \ad_{x_4}=-E_{31}-E_{42}$, so $ad(L)=\spann\{E_{34},E_{33}+E_{44},E_{32},E_{31}+E_{42}\}$ and $C_{g\ell(4,\cc)}(\ad(L))=\spann\{I_4, E_{12}+E_{34}\}$ where $E_{ij}$ is the standard basis matrix with 1 in the $i$th row and $j$th column and 0's elsewhere. By \cite{RWZ88}, $L$ is indecomposable because $\tr((E_{12}+E_{34})^2)=0$. Hence it is not isomorphic to one in the case $\delta\neq0$.
\bigskip
%%%%%%%%%%%%%%%%%%%%

\noindent\underline{\bf Case 3:} $\dim A=1$. Then we also have three subcases to consider.\\
%%%%%%%%%%%%%%%%%%%%
\indent Case 3.1: $\bbb_{A_1}=\bbb_{A_0}^1=\emptyset$. Then $L=\spann\{a_3,x,y\}$ such that $[a_3,y]=y$, so $b(L)=1$, a contradiction.\\
%%%%%%%%%%%%%%%%%%%%
\indent Case 3.2: $\bbb_{A_1}=\bbb_{A_0}^0=\emptyset$. Then $L=\spann\{a_2,x,y\}$ such that $[a_2,x]=x$ and $[a_2,y]=\gamma y$ where $\gamma\in\cc\setminus\{0\}$. Note that $\gamma\neq0$, otherwise $b(L)=1$. By setting $x_1=a_2, x_2=x$ and $x_3=y$, we obtain $L_{1,\gamma}:=\spann\{x_1,x_2,x_3\}$ such that $[x_1,x_2]=x_2$ and $[x_1,x_3]=\gamma x_3$ where $\gamma\in\cc\setminus\{0\}$.\\
%%%%%%%%%%%%%%%%%%%%
\indent Case 3.3: $\bbb_{A_0}^1=\bbb_{A_0}^0=\emptyset$. Then $L=\spann\{a_1,x,y\}$ such that $[a_1,x]=\alpha x$ and $[a_1,y]=x+\beta y$ where $\alpha,\beta\in\cc\setminus\{0\}$. Note that $\alpha\neq0$ and $\beta\neq0$, otherwise $b(L)=1$. Set $\delta=\beta\alpha^{-1}\neq0$. Then
$$
[x,\alpha y]=0, [\alpha^{-1}a_1,x]=x \text{ and }[\alpha^{-1}a_1,\alpha y]=x+\beta y=x+\beta\alpha^{-1}(\alpha y)=x+\delta(\alpha y),
$$
so we define $x_1=\alpha^{-1}a_1, x_2=x$ and $x_3=\alpha y$. Then we have $L_{2,\delta}:=\spann\{x_1,x_2,x_3\}$ such that $[x_1,x_2]=x_2$ and $[x_1,x_3]=x_2+\delta x_3$ where $\delta\in\cc\setminus\{0\}$.

In $L_{2,\delta}$, we notice that $[x_1, x_2+(\delta-1)x_3]=\delta(x_2+(\delta-1)x_3)$. Thus $L_{1,\gamma}$ is isomorphic to $L_{2,\gamma}$ via an isomorphism $\varphi:L_{1,\gamma}\to L_{2,\gamma}$ defined by $\varphi(x_1)=x_1, \varphi(x_2)=x_2 \text{~~~~and~~~~} \varphi(x_3)=x_2+(\gamma-1)x_3$ where $\gamma\in\cc\setminus\{0,1\}$. By including $\gamma=1$ into account, we finally have two possibilities as follows:
\begin{itemize}
\item $L=\spann\{x_1,x_2,x_3\}$ such that $[x_1,x_2]=x_2$ and $[x_1,x_3]=x_2+x_3$, which is the Lie algebra give in (\ref{four}), or
\item $L_\gamma=\spann\{x_1,x_2,x_3\}$ such that $[x_1,x_2]=x_2$ and $[x_1,x_3]=\gamma x_3$ where $\gamma\in\cc\setminus\{0\}$, which is the Lie algebra give in (\ref{five}).
\end{itemize}
In fact, $L_\gamma$ is isomorphic to $L_\delta$ if and only if $\gamma=\delta$ or $\gamma=\delta^{-1}$ as referred in \cite{EW06}, while $L$ is not isomorphic to $L_\gamma$. To see this, we suppose that there exists an isomorphism $\varphi:L\to L_\gamma$ defined by
$$
\varphi(x_1)=ax_1+bx_2+cx_3,~~~~~\varphi(x_2)=dx_2+ex_3 \text{~~~~and~~~~} \varphi(x_3)=fx_2+gx_3
$$
where $a,b,c,d,e,f,g,h\in\cc$. Note that $a\neq0$, otherwise $\varphi$ is not surjective. As $\varphi$ is a bijection, we have $a(dg-ef)=\det(\varphi)_\bbb\neq0$ where $\bbb=\{x_1,x_2,x_3\}$ is an ordered basis for $L_\gamma$. Thus $a\neq0$ and $dg-ef\neq0$. As $\varphi([x_1,x_3])=[\varphi(x_1),\varphi(x_3)]$, we have $af=d+f$ and $ag\gamma=e+g$. Similarly, the condition $\varphi([x_1,x_2])=[\varphi(x_1),\varphi(x_2)]$ yields $ad=d$ and $ae\gamma=e$. The first equality implies $a=1$ or $d=0$, while the other yields $e=0$ or $\gamma=a^{-1}$. Thus we consider four cases. 
\begin{enumerate}
    \item[(i)] If $a=1$ and $e=0$, then $af=d+f$ implies $d=0$ and $dg-ef=0$.
    \item[(ii)] If $a=1$ and $\gamma=a^{-1}=1$, then $af=d+f$ implies $d=0$. Also, $ag\gamma=e+g$ yields $e=0$, so $dg-ef=0$.
    \item[(iii)] If $d=0$ and $e=0$, then clearly $dg-ef=0$.
    \item[(iv)] If $d=0$ and $\gamma=a^{-1}=1$, then $ag\gamma=e+g$ yields $e=0$, so $dg-ef=0$.
\end{enumerate}
Hence all cases eventually lead to $dg-ef=0$, a contradiction. Consequently, $L$ and $L_\gamma$ are not isomorphic.
\end{proof}

%%%%%%%%%%%%%%%%%%%%%%%%%%%%%%%%

\section{Concluding remarks}
\vspace{\baselineskip}
We have completely classified all finite-dimensional pure solvable nonnilpotent Lie algebra of breadth 2 over $\mathbb{C}$ as summarized in the table below.
%%%%%%%%%%%%%%%%%%%%%%%%%%%%%%%%%%%%%%
\begin{center}
\resizebox{14cm}{!}{
\begin{tabular}{|c|L{5.25cm}|L{7cm}|}
\hline
\textbf{Theorem} & \textbf{Basis} & \textbf{Nonzero brackets} \\ 
\hline
\ref{L23} & $L_1=\spann\{x_1,x_2,x_3,z\}$ & $[x_1,x_2]=x_2, [x_1,x_3]=-x_3, [x_2,x_3]=z$\\ 
\hline
\multirow{2}{*}{\ref{L22Lk1}} & $L_2=\spann\{x_1,x_2,z_1,z_2,\ldots,z_n,z\}$ & $[x_1,x_2]=x_1, [z_i,z_{i+1}]=z$ \hspace{70pt} for all $i=1,3,5,\ldots,n-1$ where $n$ is even\\
\cline{2-3}
& $L_3=\spann\{x_1,x_2,z_1,z_2,\ldots,z_n,z\}$ & $[x_1,x_2]=x_1, [x_2,z_1]=z, [z_i,z_{i+1}]=z$ \hspace{20pt} for all $i=2,4,6,\ldots,n-1$ where $n$ is odd\\
\hline
\multirow{5}{*}{\ref{L22aLk2}} & $L_4=\spann\{x_1,x_2,x_3,x_4,x_5\}$ & $[x_1,x_5]=x_4, ~[x_2,x_4]=x_4$, $[x_3,x_5]=x_5$\\
\cline{2-3}
& $L_5=\spann\{x_1,x_2\}\oplus\spann\{x_3,x_4\}$ & $[x_1,x_2]=x_2$, $[x_3,x_4]=x_4$\\
\cline{2-3}
& $L_6=\spann\{x_1,x_2,x_3,x_4\}$ & $[x_1,x_4]=x_3, [x_2,x_3]=x_3$, $[x_2,x_4]=x_4$\\
\cline{2-3}
& $L_7=\spann\{x_1,x_2,x_3\}$ & $[x_1,x_2]=x_2$, $[x_1,x_3]=x_2+x_3$\\
\cline{2-3}
& $L_{8,\gamma}=\spann\{x_1,x_2,x_3\}$ & $[x_1,x_2]=x_2$, $[x_1,x_3]=\gamma x_3$ where $\gamma\in\cc\setminus\{0\}$\\
\hline
\end{tabular}
}
\end{center}
%%%%%%%%%%%%%%%%%%%%%%%%%%%%%%%%%%%%%%
\medskip
Note that we have the isomorphism condition for the last class of Lie algebra: 
\[
L_{8,\gamma}\cong L_{8,\delta} \text{ if and only if } \gamma=\delta \text{ or } \gamma=\delta^{-1}.
\]

%%%%%%%%%%%%%%%%%%%%%%%%%%%%%%%%%%%%%%

In \cite{G05}, solvable Lie algebras of dimension 3 and 4 have been classified. Among these, solvable Lie algebras over $\cc$ of breadth 2 are $L^2, L_a^3 (a\neq0), L_a^4 (a\neq0), M_0^3, M_{0,b}^6 (b\in\cc), M_{0,b}^7 (b\in\cc), M^8, M_0^{13}$ and $M_a^{14} (a\neq0)$. Note that $M_{0,0}^7$ is nilpotent. If $b\neq0$, then $M_{0,b}^7$ is not pure as $bx_1-x_3\in Z(M_{0,b}^7)\setminus[M_{0,b}^7,M_{0,b}^7]$. When $b\neq0$, $M_{0,b}^6$ is not pure because $-bx_1-x_2+x_3\in Z(M_{0,b}^6)\setminus[M_{0,b}^6,M_{0,b}^6]$. Analogously, $M_0^3$ is not pure since $x_2-x_3\in Z(M_0^3)\setminus[M_0^3,M_0^3]$. Note that $M_a^{14}$ are all isomorphic over $\cc$, so only $M_1^{14}$ will be considered. By the same reason, we also consider $L_1^4$. Additionally, $\dim L_2\geq5$ and $\dim L_4=5$, so they are excluded from comparison. Consequently, our results, $L_1, L_3 (n=1), L_5, L_6, L_7, L_{8,\gamma} (\gamma\neq0)$, coincide with $L^2, L_a^3 (a\neq0), L_1^4, M_{0,0}^6, M^8, M_0^{13}$ and $M_1^{14}$ as follows:
\begin{enumerate}
\item $L_1$ is isomorphic to $M_1^{14}$ via $\varphi_1:L_1\to M_1^{14}$ defined by $\varphi_1(x_1)=-x_4, \varphi_1(x_2)=x_1-x_3, \varphi_1(x_3)=x_1+x_3$ and $\varphi_1(z)=-2x_2$.
\item When $n=1$, $L_3$ is isomorphic to $M_{0,0}^6$ via $\varphi_3:L_3\to M_{0,0}^6$ defined by $\varphi_3(x_1)=x_3, \varphi_3(x_2)=-x_4, \varphi_3(z_1)=x_2-x_1$ and $\varphi_3(z)=x_2-x_3$.
\item $L_5$ is isomorphic to $M^8$ via $\varphi_5:L_5\to M^8$ defined by $\varphi_5(x_1)=x_1, \varphi_5(x_2)=x_2, \varphi_5(x_3)=x_3$ and $\varphi_5(x_4)=x_4$.
\item $L_6$ is isomorphic to $M_0^{13}$ via $\varphi_6:L_6\to M_0^{13}$ defined by $\varphi_6(x_1)=x_1-x_3, \varphi_6(x_2)=x_4, \varphi_6(x_3)=-x_2$ and $\varphi_6(x_4)=x_1$. 
\item $L_7$ is isomorphic to $L_{-1/4}^3$ via $\varphi_7:L_7\to L_{-1/4}^3$ defined by $\varphi_7(x_1)=2x_3, \varphi_7(x_2)=x_1-2x_2$ and $\varphi_7(x_3)=-x_1$.
\item $L_{8,1}$ is isomorphic to $L^2$ via $\varphi_8:L_{8,1}\to L^2$ defined by $\varphi_8(x_1)=x_3, \varphi_8(x_2)=x_1$ and $\varphi_8(x_3)=x_2$.
\item $L_{8,-1}$ is isomorphic to $L_1^4$ via $\psi_8:L_{8,-1}\to L_1^4$ defined by $\psi_8(x_1)=x_3, \psi_8(x_2)=x_1+x_2$ and $\psi_8(x_3)=x_1-x_2$.
\item When $\gamma\in\cc\setminus\{0,\pm1\}$ and $a\in\cc\setminus\{0,-1/4\}$, $L_{8,\gamma}$ is isomorphic to $L_a^3$ via $\chi_8:L_{8,\gamma}\to L_a^3$ defined by $\chi_8(x_1)=\lambda_2^{-1}x_3, \chi_8(x_2)=-\lambda_1x_1+x_2$ and $\chi_8(x_3)=-\lambda_2x_1+x_2$, where $\lambda_1$ and $\lambda_2$ are two distinct roots of $x^2-x-a$ such that $\gamma=\lambda_1\lambda_2^{-1}$.
\end{enumerate}

%%%%%%%%%%%%%%%%%%%%%%%%%%%%%%%%

\section{Acknowledgements}
The authors are very grateful to the referee for careful reading and valuable comments. This study is supported by Thammasat University Research Fund, Contract No. TUFT 108/2567. This research project is supported by grants for development of new faculty staff, Ratchadaphiseksomphot Fund, Chulalongkorn University.

%%%%%%%%%%%%%%%%%%%%%%%%%%%%%%%%

%%%%%%%%%%%%%%%%%%%%%%%%%%%%%%%%

%\newpage
%\input{9-Appendix.tex}


\begin{thebibliography}{99}

\bibitem{B75} N. Bourbaki, Lie groups and Lie algebras Chapters 7-9, Elements of Mathematics, Berlin, 1975.

\bibitem{G05} W.\,A.\, de Graaf, Classification of solvable Lie algebras, Experiment. Math. 14(1) (2005) 15--25.

\bibitem{EW06} K. Erdmann, MJ. Wildon, Introduction to Lie algebras, Springer, London, 2006.

\bibitem{GK96} M. Goze, Y. Khakimdjanov, Nilpotent Lie algebras, Mathematics and its applications, Kluwer Academic Publishers, 1996.

\bibitem{H72}
J. Humphreys, Introduction to Lie algebras and Representation Theory, Springer-Verlag, New York, 1972.

\bibitem{KMS15}
B. Khuhirun, K. C. Misra, E. Stitzinger, On nilpotent Lie algebras of small breadth. J. Algebra 444 (2015) 328--338.

\bibitem{KNS231} R. Kundu, T.K. Naik, A. Singh, Nilpotent Lie algebras of breadth type $(0, 3)$. Commun. Algebra 51(9) (2023) 3792--3809.

\bibitem{KNS23} R. Kundu, T.K. Naik, A. Singh, Nilpotent Lie algebras with two centralizer dimensions over a finite field. J. Algebra 633 (2023) 362--388.

\bibitem{LCDNV22} V.\,A.\, Le, H.T. Cao, H.Q. Duong, T.A. Nguyen, T.N. Vo, On the problem of classifying solvable Lie algebras having small codimensional derived algebras. Comm. Alg. 50(9) (2022) 3775--3793.

\bibitem{M63} G.\,M.\, Mubarakzyanov, Classification of solvable Lie algebras of sixth order with a non-nilpotent basis element, Izvestiya Vysshikh Uchebnykh Zavedenii. Matematika. 4(35) (1963) 104--116. 

\bibitem{PS99} G. Parmeggiani, B. Stellmacher, $p$-Groups of small breadth. J. Algebra 213 (1999) 52--68.

\bibitem{RWZ88}
D. Rand, P. Winternitz, H. Zassenhaus, On the identification of a Lie algebra given by its structure constants. I. Direct decompositions, Levi decompositions, and nilradicals. Linear Algebra Appl. (1988) 197--246. 

\bibitem{R17} 
E. Remm, Breadth and characteristic sequence of nilpotent Lie algebras. Commun. Algebra 45(7) (2017) 2956--2966. 

\bibitem{SK10} L. Šnobl, D. Karásek, Classification of solvable Lie algebras with a given nilradical by means of solvable extensions of its subalgebras. Linear Algebra Appl. 432(7) (2010) 1836--1850.

\bibitem{SWK21} S. Sriwongsa, K. Wiboonton, B. Khuhirun, Characterization of nilpotent Lie algebras of breadth 3 over finite fields of odd characteristic. J. Algebra 586 (2021) 935--972.

\bibitem{T11} D.\,A.\, Towers, Solvable Lie A-algebras. J. Algebra 340(1) (2011) 1--12.




%%%%%%%%%%%%%%%%%%%%%%%%%%%%%%%%%%%%%%%%%%%%%%%%%%%%%%%

%\bibitem{EA25} B. Eick, Ó.F. Ayala, On the class-breadth conjecture for algebras and T-groups. J. Algebra (2025) (in press).

%\bibitem{ENO06} B. Eick, M.F. Newman, E.A. O'Brien, The class-breadth conjecture revisited. J. Algebra 300(1) (2006) 384--393.

%\bibitem{K14}
%B. Khuhirun, (2014). Classification of nilpotent Lie algebras with small breadth. ProQuest Dissertations \& Theses Global. (1642719300). 

\end{thebibliography}
\end{document}